\documentclass[10pt,a4paper,reqno]{amsart}

\usepackage{amsmath,amssymb,amsthm,amsfonts}
\usepackage{amsbsy}
\usepackage[utf8]{inputenc}
\usepackage[symbol]{footmisc}

\usepackage[english]{babel}
\usepackage{url}
\usepackage{wrapfig}
\usepackage{enumitem}
\usepackage{multicol}
\usepackage{moreenum}
\usepackage{subcaption}
\usepackage{xspace}

\usepackage{wrapfig}

\usepackage{tikz-cd}

\usepackage[a4paper,left=25mm,right=25mm,top=30mm,bottom=30mm]{geometry}

\usepackage{xcolor}

\definecolor{LinkColor}{rgb}{0,0,0} 
\usepackage[colorlinks=true,linkcolor=LinkColor,citecolor=LinkColor,urlcolor= LinkColor, naturalnames, hyperindex, pdfstartview=FitH, bookmarksnumbered, plainpages]{hyperref}

\newcounter{dummy}
\makeatletter
\providecommand{\@LN}[2]{}
\newcommand\myitem[1][]{\item[#1]\refstepcounter{dummy}\def\@currentlabel{#1}}
\makeatother


\newtheorem{theorem}{Theorem}[section]
\newtheorem{corollary}[theorem]{Corollary}
\newtheorem{lemma}[theorem]{Lemma}
\newtheorem{proposition}[theorem]{Proposition}

\newtheorem{maintheorem}{Theorem}
\newtheorem{mainproposition}[maintheorem]{Proposition}

\theoremstyle{definition}
\newtheorem{definition}[theorem]{Definition}
\newtheorem{remark}[theorem]{Remark}
\newtheorem{example}[theorem]{Example}
\newcommand{\cut}{\textsf{cut}\xspace}
\newcommand{\SL}{\operatorname{SL}}
\newcommand{\GL}{\operatorname{GL}}

\newcommand{\rank}{\operatorname{rank}}

\newcommand{\Z}{\textup{Z}}
\newcommand{\U}{\textup{U}}
\newcommand{\V}{\textup{V}}

\newcommand{\ZZ}{\mathbb{Z}}
\newcommand{\QQ}{\mathbb{Q}}
\renewcommand{\O}{\mathcal{O}}

\setlength\parindent{10pt}

\usepackage[normalem]{ulem}

\title{Abelianization of the unit group of an integral group ring}

\author[A.~B\"achle]{Andreas B\"achle}
\address{(Andreas Bächle)}
 \email{\href{mailto:ABaechle@gmx.net}{ABaechle@gmx.net}}

\author[S. Maheshwary]{Sugandha Maheshwary}
\address{(Sugandha Maheshwary) Indian Institute of Science Education and Research, Mohali, Sector 81, Mohali (Punjab)-140306, India.}
\email{\href{mailto:sugandha@iisermohali.ac.in}{sugandha@iisermohali.ac.in}}

\author[L.~Margolis]{Leo Margolis}
\address{(Leo Margolis) Vakgroep Wiskunde, Vrije Universiteit Brussel, Pleinlaan 2, 1050 Brussels, Belgium.}
\email{\href{mailto:leo.margolis@vub.be}{leo.margolis@vub.be}}

\keywords{integral group rings, unit group, abelianization, torsion-free rank} 

\subjclass[2010]{16U60, 20C05, 20F14} 
\thanks{The work of the first and third author on this project was supported by the Research Foundation Flanders (FWO - Vlaanderen).
 The research of the second author is supported by DST (Department of Science and Technology), India (INSPIRE/04/2017/000897)}

\begin{document}

\maketitle

\begin{abstract}	For a finite group $G$ and $U: = \U(\mathbb{Z}G)$, the group of units of the integral group ring of $G$, we study the implications of the structure of $G$ on the abelianization $U/U'$ of $U$. We pose questions on the connections between the exponent of $G/G'$ and the exponent of $U/U'$ as well as between the ranks of the torsion-free parts of $\Z(U)$, the center of $U$, and $U/U'$. We show that the units originating from known generic constructions of units in $\mathbb{Z}G$ are well-behaved under the projection from $U$ to $U/U'$ and that our questions have a positive answer for many examples. We then exhibit an explicit example which shows that the general statement on the torsion-free part does not hold, which also answers questions from \cite{BJJKT1}. 
\end{abstract}

\section{Introduction}

A classical theorem attributed to I.~Schur asserts that if the center $\Z(H)$ of a group $H$ has finite index in $H$, then $H$ has a finite commutator subgroup $H'$. Several variants of Schur's result have been studied. For example, B. H. Neumann proved that if $H$ is a finitely generated group with finite commutator subgroup, then $H/ \Z(H)$ is finite \cite{Neu51}. In this article, we are interested in the following variations of Schur's theorem for certain classes of groups. Let $H$ be a group.
\begin{enumerate}
	\item\label{var1} Does $[H:H'] < \infty$ imply $|\Z(H)| < \infty$?
	\item\label{var2} Does $|\Z(H)| < \infty$ imply $[H:H'] < \infty$?
\end{enumerate}

These assertions are not true in general: For example, by \cite[Corollary~10.2]{Swan}, the group $\SL_2(\mathbb{Z}[\sqrt{-2}])$ has infinite abelianization, yet it has finite center. In fact, for some interesting classes of groups, a positive answer implies further structural results as we explain below.
We provide more details and positive as well as negative examples for \eqref{var1} and \eqref{var2} in Section~\ref{sec:prelim}.\\

The abelianization and the center of the unit group of an order in a finite-dimensional semi-simple \linebreak rational algebra are closely related through $K$-theory. For instance, if $\O$ is an order in a finite-dimensional semi-simple rational algebra with unit group $U = \U(\O)$, then $\rank  U/U'\geqslant \rank K_1(\O)=\rank \Z(U)$, where $K_1(\O)=\GL(\O)/\GL(\O)'$, and $\rank A$ denotes the torsion-free rank of a finitely generated abelian group $A$ (see e.g.\ \cite[Proposition~6.1]{BJJKT1}). \\

In this article, our main object of interest is the unit group $\U(\ZZ G)$ of the integral group ring $\ZZ G$ of a finite group $G$. In particular, as $\ZZ G$ is an order in the rational group algebra $\QQ G$, we see that \eqref{var1} holds for the group $\U(\ZZ G)$. 
As we are not interested in the influence of the coefficient ring $\ZZ$ on the structure of $\U(\ZZ G)$, we define the group $\V(\ZZ G)$ of normalized units as the subgroup formed by elements of $\U(\ZZ G)$ of augmentation $1$. Note that $\U(\ZZ G)/\U(\ZZ G)' \cong C_2 \times \V(\ZZ G)/\V(\ZZ G)'$. 

From now onwards, let $G$ be a finite group and $V: = \V(\ZZ G)$. \\

	The following variations of \eqref{var2} appear natural for $V$:
	\vspace{-0.4cm}
	\begin{enumerate}
		\item[] 
		\myitem[(R1)]\label{R1} Is $\rank V/V' = \rank \Z(V)$?
		\myitem[(R2)]\label{R2} Assume $\Z(V)$ is finite. Is $V/V'$ also finite?\\			
\end{enumerate}

The above questions address the free subgroup of $V/V'$, while the ones listed next try to get hold of its torsion elements. Denote by $\exp H$, the exponent of a group $H$, i.e., the greatest common divisor of the orders of torsion elements of $H$. 

\begin{enumerate}
	\item[] 
	\myitem[(E1)]\label{E1} Is $\exp V/V' = \exp G/G'$? 
	\myitem[(E2)]\label{E2} Does $\exp V/V'$ divide $\exp G$?
	\myitem[(P)]\label{P} If $V/V'$ contains an element of order $p$, does $G$ contain an element of order $p$, for every prime $p$? \\
\end{enumerate}

As already observed, if $V$ has finite abelianization, then the center of $V$ is finite. Also, by the Berman-Higman theorem (see e.g.\,\cite[Proposition 1.5.1]{JdR1}), we have that every central unit of finite order in $\ZZ G$ is trivial, i.e., it is an element of $\Z(G)$. Consequently, it follows that if $V$ has finite abelianization, then the center of $V$ consists only of trivial units and in this case, $G$ is known as a \cut group - \textbf{c}entral \textbf{u}nits \textbf{t}rivial \cite{BMP17}. This class of groups has been intensively studied lately \cite{BMP17,Mah18, Bac18, MP18, BCJM, Tre19, BBM20, Gri20}. From yet another perspective, understanding $V/V'$ can also be regarded as a first important step towards understanding the lower central series of $V$, an important object for which very few results are available \cite{Mah21}.\\

Surprisingly, little is known about the structure of $V/V'$. Some examples have been studied in \cite{SGV97, SG00, SG01}, but the only general result we are aware of is that \ref{R1} has a positive answer in case $\QQ G$ has no exceptional component (see Definition~\ref{exc_comp}) \cite[Theorem~6.3]{JOdRVG}. 
A crucial role in the proof of this result was played by the so-called Bass units. The second important generic construction of units in integral group rings are the so-called bicyclic units (see Section~\ref{sec:BicAndBass} for definitions). Using more constructive methods, we generalize \cite[Theorem~6.3]{JOdRVG}:

\begin{maintheorem}\label{theo:ranks}  Let $G$ be a finite group and let $\mathcal{B}$ be the subgroup of $V = \V(\ZZ G)$, generated by the elements of $G$, the bicyclic and the Bass units of $\ZZ G$. If $\mathcal{B}$ has finite index in $V$, then $\rank V/V' = \rank \Z(V)$, i.e., \ref{R1} has a positive answer.
\end{maintheorem}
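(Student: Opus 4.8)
The plan is to establish the two inequalities between $\rank V/V'$ and $\rank \Z(V)$ separately. The bound $\rank V/V' \geqslant \rank \Z(V)$ is free: it is the $K$-theoretic estimate recalled in the introduction (\cite[Proposition~6.1]{BJJKT1}), bearing in mind that $\U(\ZZ G)/\U(\ZZ G)' \cong C_2 \times V/V'$. Hence everything reduces to proving $\rank V/V' \leqslant \rank \Z(V)$. Since $V = \U(\ZZ G)$ is finitely generated and $\mathcal{B}$ has finite index in $V$, the image $\overline{\mathcal{B}} = \mathcal{B}V'/V'$ has finite index in the finitely generated abelian group $V/V'$, so $\rank V/V' = \rank \overline{\mathcal{B}}$. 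As $\overline{\mathcal{B}}$ is generated by the images of the elements of $G$ and of the bicyclic and Bass units (Section~\ref{sec:BicAndBass}), it suffices to bound the torsion-free rank contributed by these three families.

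The elements of $G$ have finite order, so they contribute nothing to the rank. To control the remaining two families I would use the reduced norm. Write $\QQ G = \prod_i M_{n_i}(D_i)$ for the Wedderburn decomposition, with $F_i = \Z(D_i)$, and let $\mathrm{nr}\colon \U(\QQ G) \to \prod_i F_i^{*}$ be the reduced norm. As its target is abelian, $\mathrm{nr}$ factors through a homomorphism $\overline{\mathrm{nr}}\colon V/V' \to \prod_i F_i^{*}$, and standard facts (the Hasse--Schilling--Maass theorem together with Dirichlet's unit theorem) show that the central units already realise the full rank of its image, while $\mathrm{nr}$ is injective modulo torsion on $\Z(V)$; thus $\rank(\operatorname{im}\overline{\mathrm{nr}}) = \rank \Z(V)$. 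Since rank is additive on the short exact sequence of finitely generated abelian groups defining $\overline{\mathrm{nr}}$, we get $\rank V/V' = \rank \Z(V) + \rank \ker \overline{\mathrm{nr}}$, and the theorem becomes equivalent to the assertion that $\ker \overline{\mathrm{nr}}$ is finite.

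The crucial and hardest step is to show that every bicyclic unit maps to a torsion element of $V/V'$. A bicyclic unit is unipotent, of the form $1+\eta$ with $\eta^2 = 0$; it therefore has reduced norm $1$, lies in $\ker \overline{\mathrm{nr}}$, and appears in each Wedderburn component as a transvection-like element of $\SL_{n_i}(\O_i)$. For the components that are not exceptional (Definition~\ref{exc_comp}) the congruence subgroup property forces the image of the reduced-norm-one units in the abelianization to be finite, exactly as in \cite[Theorem~6.3]{JOdRVG}. The difficulty is concentrated entirely in the exceptional components, where $\SL_2(\O_i)$ may have infinite abelianization, as the example $\SL_2(\ZZ[\sqrt{-2}])$ from the introduction illustrates. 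Here I would argue constructively: exploiting the symmetry $\eta \mapsto -\eta$, which sends $u = 1+\eta$ to $u^{-1}$, I would produce a unit of $\ZZ G$ conjugating $u$ to $u^{-1}$, so that $\overline{u} = \overline{u^{-1}} = \overline{u}^{-1}$ in the abelian group $V/V'$ and hence $\overline{u}^{2} = 1$. Producing this conjugating element inside $\ZZ G$, rather than merely inside the ambient matrix algebra, is what I expect to be the main obstacle.

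It remains to see that $\ker \overline{\mathrm{nr}}$ is finite. Because $\mathcal{B}$ has finite index, $\ker \overline{\mathrm{nr}}$ is, up to finite index, the image in $V/V'$ of the reduced-norm-one elements of $\mathcal{B}$; these are generated by the bicyclic units together with reduced-norm-one products of Bass units and torsion units. The bicyclic part is torsion by the previous step. For the Bass part I would show that the images of Bass units in $V/V'$ are detected, modulo torsion, by their reduced norms---so that a reduced-norm-one product of Bass units is already torsion in $V/V'$---using the description of the reduced norms of Bass units by cyclotomic units underlying \cite[Theorem~6.3]{JOdRVG}. Combining these, $\ker \overline{\mathrm{nr}}$ is finite, whence $\rank V/V' = \rank \Z(V)$ and \ref{R1} has a positive answer.
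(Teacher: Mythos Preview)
Your overall architecture is sound and matches the paper's: establish $\rank \Z(V) \leqslant \rank V/V'$ via \cite[Proposition~6.1]{BJJKT1}, then bound $\rank V/V'$ by the rank of $\varphi(\mathcal{B})$ using the finite-index hypothesis, and finally show that trivial units and bicyclic units contribute only torsion while Bass units contribute exactly $\rank \Z(V)$. The difference lies in how these last two steps are executed, and in the bicyclic case your proposal has a genuine gap.

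For the bicyclic units you propose to find, for each $u = 1+\eta$ with $\eta^2 = 0$, a unit of $\ZZ G$ conjugating $u$ to $u^{-1}$, forcing $\overline{u}^2 = 1$. You correctly flag this as the main obstacle, and indeed you give no mechanism for producing such a conjugator; the symmetry $\eta \mapsto -\eta$ is a linear involution of $\ZZ G$, not an inner automorphism, and there is no reason it should be realised by conjugation inside $V$. The paper bypasses this entirely with a short explicit identity (Proposition~\ref{prop:bicyclic}): if $h$ has order $n$ then
\[
\prod_{k=1}^{n} [\,b(g,h)^{-1},\, h^k\,] \;=\; b(g,h)^{n},
\]
which one checks directly from $(1-h)\widetilde{h} = 0$ and $\widetilde{h}h^k = \widetilde{h}$. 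Thus $\overline{b(g,h)}^{\,n} = 1$, with no need to conjugate $u$ to $u^{-1}$ and no appeal to the Wedderburn decomposition or to exceptional components. Note also that this gives order dividing $n = o(h)$, not $2$; your stronger claim would force $\overline{b(g,h)} = 1$ whenever $o(h)$ is odd, which is not established anywhere.

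Your reduced-norm detour for the Bass part is workable in spirit but heavier than necessary. The paper argues directly on $\varphi(\mathcal{B}_1)$ (Proposition~\ref{prop:Bass}): the lower bound comes from the surjection $\varphi(\mathcal{B}_1) \twoheadrightarrow \mathcal{B}_1/(\mathcal{B}_1 \cap \GL(\ZZ G)')$, which has finite index in $K_1(\ZZ G)$, together with $\rank K_1(\ZZ G) = \rank \Z(V)$; the upper bound is obtained by showing, via the Bass-unit identities \eqref{GetBassDown}--\eqref{PutMinusBass} and the conjugacy $u_{k,m}(g) \sim_V u_{k,m}(g^l)$ whenever $g \sim_G g^l$, that the images $\varphi(u_{k,m}(g))$ for $g$ in a fixed $\mathbb{Q}$-class span a group of rank at most one less than the number of $\mathbb{R}$-classes in that $\mathbb{Q}$-class. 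This yields $\rank \varphi(\mathcal{B}_1) = n_{\mathbb{R}} - n_{\mathbb{Q}} = \rank \Z(V)$ with no separate analysis of $\ker\overline{\mathrm{nr}}$ and no case distinction on exceptional components.
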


In addition, we show that the torsion-free contribution by $\mathcal{B}$ to $V/V'$ is completely under control and that the torsion part satisfies \ref{E2}.

\begin{mainproposition}\label{prop:MainBicBass}
Let $G$ be a finite group and let $\mathcal{B}$ the subgroup of $V = \V(\ZZ G)$ generated by the elements of $G$, the bicyclic and the Bass units of $\ZZ G$. Denote by $\varphi \colon V \to V/V'$ the natural projection. Then $\rank \varphi(\mathcal{B}) = \rank \Z(V)$ and $\exp \varphi(\mathcal{B})$ divides $\exp G$. \end{mainproposition}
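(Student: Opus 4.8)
The plan is to study the projection $\varphi$ separately on the three families generating $\mathcal{B}$ — the trivial units from $G$, the bicyclic units, and the Bass units — and to organize the argument around the componentwise reduced norm $\mathrm{nr}\colon \U(\QQ G)\to \Z(\QQ G)^\times=\prod_i F_i^\times$. The point is that $\mathrm{nr}$ kills unipotent units and, its target being abelian, descends to a homomorphism $\overline{\mathrm{nr}}\colon V/V'\to \Z(\QQ G)^\times$ that is rank-faithful on the center (on $\Z(V)$ it is a power map with finite, root-of-unity, kernel). This separates the ``central/rank'' behaviour, carried by the Bass units, from the ``unipotent'' behaviour, carried by the bicyclic units. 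The one subtlety to keep in mind throughout is that $V/V'$ may carry \emph{extra} torsion-free rank coming from exceptional (Bianchi-type) components that is invisible to $\mathrm{nr}$ — this is exactly the phenomenon causing \ref{R1} to fail in general — so the argument needs a second layer showing the generic units in $\mathcal{B}$ do not see this extra rank.

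For a bicyclic unit $b_h=1+(1-g)h\widehat{g}$ (with $\widehat{g}=\sum_{i=0}^{o(g)-1}g^i$) I would first record the additive structure: since $\widehat{g}(1-g)=0$, one has $b_h b_{h'}=b_{h+h'}$, so $h\mapsto b_h$ is a homomorphism from the additive group $(1-g)\ZZ G\widehat{g}$ into $V$. Conjugation gives $g^{-i}b_h g^{i}=b_{g^{-i}hg^{i}}$, hence $\varphi(b_h)=\varphi(b_{g^{-i}hg^{i}})$, and with $n=o(g)$ and $H=\sum_{i=0}^{n-1}g^{-i}hg^{i}$ I obtain $\varphi(b_h)^n=\varphi(b_{H})$. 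As $H$ centralizes $g$, $(1-g)H\widehat{g}=H(1-g)\widehat{g}=0$, so $b_H=1$ and therefore $\varphi(b_h)^{n}=1$. Thus every bicyclic unit maps to a torsion element of order dividing $o(g)\mid \exp G$ (the other family of bicyclic units is handled symmetrically). This single clean computation disposes of both the rank contribution (it is $0$) and the exponent contribution (it divides $\exp G$) of the bicyclic units, and it is insensitive to exceptional components. Together with the fact that elements of $G$ are torsion of order dividing $\exp G$, this shows the free rank of $\varphi(\mathcal{B})$ comes entirely from the Bass units.

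It then remains to prove $\rank\langle\varphi(u):u\text{ a Bass unit}\rangle=\rank\Z(V)$ and to bound the torsion. For the lower bound I would invoke the number-theoretic input behind \cite[Theorem~6.3]{JOdRVG}: componentwise, $\mathrm{nr}(u)$ is a cyclotomic unit in the abelian field $F_i$, and cyclotomic units span a finite-index subgroup of the relevant unit groups, realizing the full rank $\rank\Z(V)$; since $\rank\langle\varphi(u)\rangle\ge\rank\langle\mathrm{nr}(u)\rangle$, this gives ``$\ge$''. For the upper bound I use that $\mathrm{nr}(\U(\ZZ G))$ has rank $\rank K_1(\ZZ G)=\rank\Z(V)$ (the relation recalled in the introduction), so $\rank\langle\mathrm{nr}(u)\rangle\le\rank\Z(V)$; the remaining task is to show the reduced-norm-kernel part of the Bass images is torsion, i.e.\ that the Bass units do not contribute to the exceptional free part of $V/V'$. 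For the exponent, the torsion of $\langle\mathrm{nr}(u)\rangle$ lies in the totally real direction and is at worst $\{\pm1\}$, while $\mathrm{nr}(g)$ is a root of unity of order dividing $o(g)\mid\exp G$ because each character field $F_i$ sits inside $\QQ(\zeta_{\exp G})$.

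I expect the genuine obstacles to be the upper rank bound for the Bass units and, above all, the exponent. The rank ``$\le$'' requires ruling out any Bass contribution to the reduced-norm-kernel (exceptional) rank, which cannot be seen by $\mathrm{nr}$ and must be extracted from the explicit shape of Bass units sitting in the commutative subalgebras $\QQ\langle g\rangle$. The exponent is harder still: controlling the order of every torsion element of $\varphi(\mathcal{B})$ — not merely of the generators — means controlling torsion produced by relations among the Bass units and across the extension $0\to\ker\overline{\mathrm{nr}}\cap\varphi(\mathcal{B})\to\varphi(\mathcal{B})\to\overline{\mathrm{nr}}(\varphi(\mathcal{B}))\to0$, where the naive bound only gives a product of exponents. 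I anticipate resolving this not through abstract $K$-theory but through the explicit relations for Bass and bicyclic units established in Section~\ref{sec:BicAndBass}, reducing the entire torsion bound to the already-controlled orders of $\varphi(g)$, $\varphi(b_h)$ and $\mathrm{nr}$ of Bass units, while paying attention to the factor-$2$/odd-order subtlety (that $\mu(\QQ(\zeta_e))$ has order $2e$ for odd $e$), which forces one to check that no order-$2$ torsion survives when $\exp G$ is odd.
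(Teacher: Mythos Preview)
Your treatment of the bicyclic units is correct and clean; averaging over conjugation by powers of $g$ is a slight variant of the paper's product-of-commutators identity (Proposition~\ref{prop:bicyclic}) and yields the same conclusion. Likewise your lower bound $\rank\varphi(\mathcal{B})\geqslant\rank\Z(V)$ via the surjection to a finite-index subgroup of $K_1(\ZZ G)$ is essentially the paper's argument.

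The genuine gap is exactly where you flag it: you do not prove the upper rank bound for the Bass contribution, nor the exponent bound. Your proposed route through the reduced norm runs into the obstacle you yourself identify --- controlling the kernel of $\overline{\mathrm{nr}}$ on the Bass images --- and you leave it unresolved. The paper bypasses $\mathrm{nr}$ entirely for the upper bound and argues combinatorially using $\rank\Z(V)=n_{\mathbb{R}}-n_{\QQ}$. The two explicit inputs are: (i) if $g\sim_G g^l$ then $\varphi(u_{l,m}(g))$ has order dividing the multiplicative order $s$ of $l$ modulo $o(g)$, proved by a commutator identity $\prod_{i=1}^{s-1}[u_{l,m}(g)^{-1},h^i]=u_{l,m}(g)^s$ entirely analogous to your bicyclic computation (Lemma~\ref{lem_BassTorsion}); and (ii) if $g\sim_G g^l$ then $\varphi(u_{kl,m}(g))=\varphi(u_{k,m}(g))\varphi(u_{l,m}(g))$ (Lemma~\ref{lem_lkBass}). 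Combined with $u_{n-k,m}(g)=u_{k,m}(g)g^{-km}$, these show that within a fixed $\QQ$-class the Bass images, modulo torsion of order dividing $\exp G$, are indexed by cosets of $\langle\pm 1,L\rangle$ in $(\ZZ/n\ZZ)^\times$ where $L=\{l:g\sim_G g^l\}$ --- precisely $(\text{number of }\mathbb{R}\text{-classes in that }\QQ\text{-class})-1$ of them. Summing gives at most $n_{\mathbb{R}}-n_{\QQ}$, hence equality.

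Your concern about the exponent is largely dissolved once this rank equality is in hand. The lower and upper bounds coinciding means the chosen infinite-order Bass images generate a \emph{free} abelian subgroup $F$ of full rank in $\varphi(\mathcal{B})$; any torsion element $x=f\cdot\prod t_j^{a_j}$ with $f\in F$ and the $t_j$ torsion then forces $f=1$. Hence the torsion subgroup of $\varphi(\mathcal{B})$ is generated by the torsion generators alone --- images of $G$, bicyclic images, and the finite-order Bass images from (i) --- each of order dividing $\exp G$ (for (i), because the conjugating element $h$ with $g^h=g^l$ has order divisible by $s$). No extension argument across $\ker\overline{\mathrm{nr}}$, and no separate parity analysis, is required.
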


	The condition for the group $\mathcal{B}$ as in Theorem~\ref{theo:ranks} to have finite index in $V$ is a question of major interest and has been verified for many classes of groups, see \cite[Chapters~11 and 12]{JdR1} and \cite[Chapter~3]{Sehgal1993}.  Amongst other classes, it is known to hold for dihedral groups, which in lieu of Theorem \ref{theo:ranks} yields that \ref{R1} is true for these groups (Corollary \ref{cor:dihedral}). 
	Consequently, \ref{R1} and hence \ref{R2} may also be true for a group $G$, even if  $\QQ G$ has an exceptional component.
	
	As supporting evidence for \ref{E1}, \ref{E2} and \ref{P}, we give a positive answer for the strongest of these assertions for the groups in Theorems \ref{prop:Dihedral} and \ref{theo:SmallOrders}.
	
	\begin{maintheorem}\label{prop:Dihedral} Let $G$ be a dihedral group of order $2p$, where $p$ is an odd prime, and let $V = \V(\ZZ G)$. Then $\exp V/V' =\exp G/G'$, i.e., \ref{E1} holds for $G$.
 \end{maintheorem}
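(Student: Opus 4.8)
The plan is to reduce the claim to the single statement that $\exp V/V' = 2$ and then to establish this through the generic units that generate a finite-index subgroup of $V$. First I would record that, writing $G = \langle r, s \mid r^p = s^2 = 1,\ srs^{-1} = r^{-1}\rangle$, the commutator subgroup is $G' = \langle r\rangle$, so $G/G' \cong C_2$ and $\exp G/G' = 2$. For the lower bound, the ring epimorphism $\ZZ G \to \ZZ[G/G'] = \ZZ C_2$ induced by $r \mapsto 1$ restricts to a surjection $V \to \V(\ZZ C_2) \cong C_2$ which, having abelian image, factors through $V/V'$; since the reflection $s$ maps to the nontrivial element, $\bar s \in V/V'$ has order exactly $2$. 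It therefore remains to prove that every torsion element of $V/V'$ has order dividing $2$, which together with the order-$2$ element $\bar s$ gives $\exp V/V' = 2$ under either reading of the exponent.

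For the upper bound I would exploit that, for dihedral groups, the subgroup $\mathcal B$ generated by $G$, the bicyclic and the Bass units has finite index in $V$ (Corollary~\ref{cor:dihedral} and the references there), so that $\varphi(\mathcal B)$ has finite index in $V/V'$. The decisive observation is a short computation: for a bicyclic unit $b = 1 + n$ with $n = (1-s)r^i(1+s)$, one has $n^2 = 0$ (so $b^{-1} = 1 - n$) and $sns^{-1} = -n$, using $sr^is^{-1} = r^{-i}$ and $(1+s)(1-s) = 0$; hence $sbs^{-1} = 1 - n = b^{-1}$. Since conjugation acts trivially on the abelianization, this forces $\bar b = \bar b^{-1}$, i.e.\ $\bar b^2 = 1$ for every bicyclic unit (the same holds for the second family $1 + (1+s)r^i(1-s)$ and, up to conjugacy, for bicyclic units built from the other reflections). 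By Main Proposition~\ref{prop:MainBicBass} the free part of $\varphi(\mathcal B)$ has rank $\rank \Z(V) = (p-3)/2$ and is carried by the Bass units, while $\bar r = 1$ because $r \in G' \subseteq V'$; thus the torsion of $\varphi(\mathcal B)$ is an elementary abelian $2$-group generated by $\bar s$ and the classes $\bar b$.

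To pass from $\varphi(\mathcal B)$ to all of $V/V'$ I would use the Wedderburn decomposition $\QQ G \cong \QQ \times \QQ \times M_2(K)$ with $K = \QQ(\zeta_p + \zeta_p^{-1})$ the maximal real subfield of $\QQ(\zeta_p)$ and $R = \mathcal O_K$: the two rational factors contribute only the sign $\pm 1$, while the image of $V$ in the $M_2(K)$-component lies in $\GL_2(R)$, meets $\SL_2(R)$ in a finite-index subgroup, and the images of the bicyclic units are precisely the elementary matrices there. The bridge is then the statement that this $\SL_2$-part contributes only $2$-torsion to $V/V'$: the determinant-$(-1)$ element $\rho(s)$ inverts the elementary generators exactly as in the bicyclic computation, while the remaining directions are detected by $\det \in R^\times$, whose torsion is $\{\pm 1\}$.

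The main obstacle is precisely this last bridge, because abelianization is not a commensurability invariant: controlling the torsion of $V/V'$ rather than only of the finite-index subgroup $\varphi(\mathcal B)$ requires knowing that $\SL_2(R)$ is, up to finite index, generated by its elementary matrices and has finite, essentially $2$-primary, abelianization once the determinant-$(-1)$ symmetry from $s$ is imposed. For $p \ge 5$ this is where the congruence subgroup property and Vaserstein's theorem $\SL_2(R) = E_2(R)$ (valid since $K$ is totally real of degree $(p-1)/2 \ge 2$ with $R^\times$ infinite) would enter. The case $p = 3$, where $G = S_3$, $K = \QQ$ and $R = \ZZ$, is the delicate one, since $\SL_2(\ZZ)^{\mathrm{ab}} \cong C_{12}$ lacks the congruence subgroup property; here I would argue directly from $\GL_2(\ZZ)^{\mathrm{ab}} \cong C_2 \times C_2$, which exhibits the determinant-$(-1)$ element collapsing the $C_{12}$ to exponent $2$ and confirms the claim in the base case.
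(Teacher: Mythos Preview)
Your plan is in the right direction and you correctly identify the crux: the torsion of $\varphi(\mathcal B)$ does not, by itself, control the torsion of $V/V'$, since abelianization is not a commensurability invariant. Your lower-bound argument and the computation $s\,b(r^i,s)\,s^{-1}=b(r^i,s)^{-1}$ are fine. The gap is in the bridge.

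Your bridge amounts to: pass to the Passman--Smith matrix group $U\subseteq\GL_2(R)$, and then argue from $\SL_2(R)=E_2(R)$ (Vaserstein) together with ``the determinant-$(-1)$ element inverts elementary matrices''. But this controls $\GL_2(R)^{\mathrm{ab}}$, not the abelianization of the \emph{proper} subgroup $U$. The images of the bicyclic units are elementary matrices with entries constrained to lie in the ideal $Q$, not arbitrary elementary matrices, so they do not generate $\SL_2(R)$; and conversely, generators of $\SL_2(R)$ need not lie in $U_1=U\cap\SL_2(R)$. The inverting element you want (say $\operatorname{diag}(-1,1)$) does not even lie in $U$ under the congruence conditions $a+c\equiv b+d\equiv\pm 1\pmod Q$. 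Your $p=3$ argument has the same defect: knowing $\GL_2(\ZZ)^{\mathrm{ab}}\cong C_2\times C_2$ says nothing a priori about the abelianization of a congruence subgroup of $\GL_2(\ZZ)$, which is what $U$ is here.

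What the paper does at exactly this point is: take the congruence subgroup $\Gamma_Q\trianglelefteq U$ and use Serre's result that $\Gamma_Q$ is generated by the $\SL_2(R)$-conjugates $E(q)^S$ of the elementary matrices $E(q)$ with $q\in Q$. The subtlety is that $S\in\SL_2(R)$ need not lie in $U$, so conjugating the obvious inverter $Y=\left(\begin{smallmatrix}-1&-2\\0&1\end{smallmatrix}\right)\in U$ by $S$ may leave $U$. The paper fixes this by replacing $S$ with $C(t)S$ for a suitable $t\in R$, using $C(t)\in\C_{\SL_2(R)}(E(q))$, and solving a congruence for $t$ so that $Y^{C(t)S}\in U$. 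This yields $\varphi(\Gamma_Q)$ of exponent $2$; the remaining quotient $U_1/\Gamma_Q\cong C_{2p}$ is then handled via a short commutative-diagram argument showing the order-$p$ part dies in $V/V'$. These two steps --- explicit generators of the congruence subgroup and an inverter that provably stays inside $U$ --- are precisely the missing technical content in your sketch.
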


Abelianizations are not only interesting in their own right, but they are also closely related to geometric group theory, for instance, to fixed point properties of group actions (see e.g.\ \cite[I.6.1 Theorem~15]{Ser80}). Question \ref{R2} is exactly Question \eqref{var2} in the situation of units in integral group rings. It was recently raised explicitly in the context of the study of fixed point properties of $V$ \cite{BJJKT1}. A positive answer to \ref{R2} for $G$ proves a trichotomy \cite[Question~7.8, Proposition~7.9]{BJJKT1} which provides a very clear picture of these fixed point properties for the unit group of an integral group ring. This question was the starting point for this investigation. We provide a negative answer to \ref{R2} disproving the trichotomy in general (Example \ref{ex:Ord16}(ii)). We also answer two more questions from \cite{BJJKT1} (Corollary \ref{cor:D16+}).

\begin{maintheorem}\label{theo:SmallOrders} Let $G$ be a group and let $V = \V(\ZZ G)$.
\begin{enumerate}
\item If $G$ is of order at most $15$, then \ref{R1} and \ref{E1} have  positive answers for $G$.
\item There are non-abelian groups of order $16$ for which \ref{R1} has a positive answer. There is a group of order $16$ for which \ref{R2}, and hence also \ref{R1}, has a negative answer. 
\end{enumerate}
\end{maintheorem}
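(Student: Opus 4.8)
The plan is to separate the two parts and in each case reduce to the Wedderburn decomposition $\QQ G = \prod_i A_i$ together with the behaviour of the unit groups of orders $\Lambda_i$ in the simple components $A_i$. If $G$ is abelian then $V$ is abelian, its torsion subgroup equals $G$ by the Berman--Higman theorem, and both \ref{R1} and \ref{E1} are immediate; so I may assume $G$ is non-abelian throughout. For part (i) I would enumerate the non-abelian groups of order at most $15$, namely $S_3$, $D_8$, $Q_8$, $D_{10}$, $A_4$, $D_{12}$, $Q_{12}$ (the dicyclic group of order $12$) and $D_{14}$, and verify the two assertions for each.

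For \ref{R1}, in every one of these cases the group $\mathcal{B}$ of Theorem~\ref{theo:ranks} is known to have finite index in $V$ (see the references cited after Proposition~\ref{prop:MainBicBass}), so \ref{R1} follows at once; for $S_3=D_6$, $D_{10}$ and $D_{14}$ this is already Corollary~\ref{cor:dihedral}. For \ref{E1} I would first dispose of the groups $D_{2p}$ with $p=3,5,7$ by Theorem~\ref{prop:Dihedral}. For the remaining groups $D_8,Q_8,A_4,D_{12},Q_{12}$ one checks from the Wedderburn decomposition that every simple component has centre $\QQ$ or an imaginary quadratic field, so $G$ is a \cut group and, by \ref{R1}, $V/V'$ is finite; it then suffices to compute the exponent of this finite abelian group. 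I would do this by describing the image of $V$ in each non-commutative component: $V$ is commensurable with the direct product, over the $A_i$, of the reduced-norm-one unit groups of the orders $\Lambda_i$ arising as images of $\ZZ G$, and one computes the abelianizations of these arithmetic groups. For the $M_2(\QQ)$ components the relevant groups are finite-index subgroups of $\SL_2(\ZZ)$ or $\GL_2(\ZZ)$, and for the quaternion components they are unit groups of suitable (typically non-maximal) quaternion orders; in each case a direct computation shows that the torsion they contribute to $V/V'$ divides $\exp G/G'$. Combining this with the image $\varphi(G)$ of the group elements, which realizes an element of order exactly $\exp G/G'$, yields \ref{E1}.

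For part (ii) the positive cases proceed exactly as above: for those non-abelian groups of order $16$ whose rational group algebra has no exceptional component, or for which $\mathcal{B}$ has finite index in $V$, \ref{R1} holds by Theorem~\ref{theo:ranks}. The negative example is the crux, and here I would take $G$ to be the semidihedral group of order $16$. One computes $\QQ G \cong \QQ^4 \times M_2(\QQ) \times M_2(\QQ(\sqrt{-2}))$, the last component coming from the faithful two-dimensional character, which takes the value $\sqrt{-2}$ on the element of order $8$. Since every simple component has centre $\QQ$ or an imaginary quadratic field, $G$ is a \cut group, so $\Z(V)$ is finite and $\rank \Z(V)=0$. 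On the other hand $\ZZ G$ projects onto an order in $M_2(\QQ(\sqrt{-2}))$ whose unit group is commensurable with $\SL_2(\ZZ[\sqrt{-2}])$, and by \cite{Swan} the latter has infinite abelianization. Transporting this back to $V$ gives $\rank V/V'>0=\rank \Z(V)$, so that \ref{R1} fails; since $\Z(V)$ is finite while $V/V'$ is infinite, \ref{R2} fails as well (recall \ref{R1} implies \ref{R2}).

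The main obstacle in both parts is to control the abelianization of the whole group $V$ in terms of the arithmetic groups attached to the individual components $A_i$, which are glued together only up to commensurability and under the augmentation and reduced-norm constraints; the naive product of component abelianizations is \emph{not} $V/V'$, as already the central units show. For part (i) this is exactly what is needed to pin down $\exp V/V'$ precisely, and Proposition~\ref{prop:MainBicBass} alone does not suffice, since a finite-index subgroup of an abelian group may carry strictly smaller torsion. For the negative example in part (ii) the same issue is the decisive one: I must guarantee that the positive rank coming from $\SL_2(\ZZ[\sqrt{-2}])^{\mathrm{ab}}$ genuinely survives in $V/V'$ rather than being absorbed into commutators with the other simple components. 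I would resolve this by exploiting that $V$ is commensurable with the direct product of the unit groups of the $\Lambda_i$, so that passing to abelianizations leaves ranks unchanged and alters exponents only in a controlled, computable manner.
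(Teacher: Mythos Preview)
Your proposal has a genuine gap in the negative example for part~(ii), and it also diverges from the paper's method for part~(i).

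\textbf{The counterexample.} You take $G$ to be the semidihedral group $SD_{16}$, whereas the paper uses the modular group $D_{16}^+ = \langle a,b \mid a^8=b^2=1,\ a^b=a^5\rangle$ (\texttt{[16,6]}); the paper in fact leaves $SD_{16}$ open. More seriously, your argument for either group does not go through. You assert that the unit group of the image order $\Lambda$ of $\ZZ G$ in the faithful $2\times 2$ component is commensurable with $\SL_2(\ZZ[\sqrt{-2}])$ and then invoke that the latter has infinite abelianization. First, $\U(\Lambda)$ is commensurable with $\GL_2$ of the ring of integers, not $\SL_2$, and $\GL_2(\ZZ[\sqrt{-2}])$ has \emph{finite} abelianization (this is recorded in Proposition~\ref{prop:WedderbrunComp}). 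Second, and more fundamentally, your resolution in the final paragraph---``passing to abelianizations leaves ranks unchanged'' under commensurability---is false: a finite-index overgroup can have strictly smaller abelianization rank than the subgroup (already $\SL_2(\ZZ)$ versus a principal congruence subgroup shows this; Lemma~\ref{lem:FiniteIndexGeneral} gives only one inequality). The paper's Corollary~\ref{cor:D16+}\eqref{answer6.4} says precisely that this commensurability heuristic breaks down here: every maximal order in $\QQ D_{16}^+$ has a unit group with finite abelianization, yet $\V(\ZZ D_{16}^+)$ does not. So the infinite rank cannot be read off from the components.

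The paper instead uses an explicit presentation of $V=\V(\ZZ D_{16}^+)$: a torsion-free normal complement $K$ with known generators and relations is available from \cite{PdRR05,JL91}, the action of $G$ on $K$ is computed, and then one writes down by hand a surjective homomorphism $V\to C_\infty$. This is an explicit verification, not a structural deduction from the Wedderburn decomposition.

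\textbf{Part~(i).} Your plan for \ref{E1} is too vague at the key step. Saying that for each non-dihedral case ``a direct computation shows that the torsion they contribute to $V/V'$ divides $\exp G/G'$'' hides exactly the difficulty you yourself flag: the product over components only controls $V/V'$ up to commensurability, which, as above, does not pin down torsion exponents. The paper does not go through components at all; it uses the known explicit presentations of $V$ for $S_3$, $D_8$, $A_4$, $T=Q_{12}$ (from \cite{JP92,DJ03,SG00,Par93}) and computes $V/V'$ directly, and for $D_{12}$ it uses that bicyclic and trivial units already generate $V$ together with Proposition~\ref{prop:bicyclic}. Your reduction via $\mathcal{B}$ and Theorem~\ref{theo:ranks} is fine for \ref{R1}, but for \ref{E1} you need the concrete presentations.
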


The proof of Theorem~\ref{theo:SmallOrders} is by studying several examples for many of which we obtain explicit descriptions of the group $V/V'$. In all cases where we calculate the isomorphism type of $V/V'$, we find that \ref{E1} does hold.
 These examples might also lead to refined questions on the abelianization of the unit group. Note that traditionally major problems in the field, such as the Zassenhaus Conjectures or questions on finite index subgroups, came from observations on small examples. We are not aware of any negative result to \ref{E1}, \ref{E2} or \ref{P}. A motivation for \ref{P} is also provided by a classical theorem from G. Higman's thesis \cite{San81} that the torsion units of $\mathbb{Z}G$ perfectly reflect the prime spectrum of $G$, i.e., for each prime $p$, $\V(\mathbb ZG)$ contains an element of order $p$, if and only if $G$ does. \\

	For the precise structure of $V/V'$, one naturally looks for a description of $V$, which is one of the fundamental questions in the study of units in group rings. 
	However, the complete answer is known for very few groups. For the dicyclic group of order 12, and for two non-abelian groups of order 16, we provide an explicit presentation, in terms of generators and relations, for unit groups of their integral group rings, which had not been known before. This can be seen as a contribution to \cite[Problem 17]{Sehgal1993}.\\
		 
Another question which found major attention in the study of unit groups of integral group rings, is the existence and  structure of normal complements of the group base $G$ in $V$, see e.g.\ \cite[Section 12.5]{JdR1} or \cite[Chapter 4]{Sehgal1993}. We make use of results in this direction and our calculations to prove: 

\begin{mainproposition}\label{theo:FreeComplement}
Let $G$ be a finite group such that $V = \V(\ZZ G)$ has a free normal complement, i.e.,\ $V = F \rtimes G$ for some infinite cyclic or non-abelian free group $F$. Then $\rank V/V' = \rank \Z(V) = 0$ and $\exp V/V'  = \exp G/G'$, i.e., \ref{R1} and \ref{E1} have positive answers in this case.
\end{mainproposition}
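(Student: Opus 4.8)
The plan is to exploit the semidirect-product structure $V = F \rtimes G$ to reduce the entire statement to a question about the $G$-coinvariants of $F/F'$, and then to use the virtual freeness of $V$ to control them. First I would record the abelianization of a semidirect product. Viewing $F^{ab} = F/F'$ as a $\ZZ G$-module via the conjugation action, the subgroup $V'$ contains $[F,F]$, $[F,G]$ and $[G,G]$, and computing modulo $V'$ shows
\[
V/V' \;\cong\; (F^{ab})_G \times G/G',
\]
where $(F^{ab})_G = F^{ab}/\langle g\cdot x - x : g\in G,\ x\in F^{ab}\rangle$ is the module of $G$-coinvariants. As $G/G'$ is finite, this yields at once $\rank V/V' = \rank (F^{ab})_G$ and $\exp V/V' = \operatorname{lcm}\bigl(\exp (F^{ab})_G,\ \exp G/G'\bigr)$. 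Hence \ref{R1} and \ref{E1} are equivalent, respectively, to the assertions that $(F^{ab})_G$ is finite and that its exponent divides $\exp G/G'$.

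Next I would dispose of the centre. Since $\Z(V)\cap F \le \Z(F)$, a non-abelian free $F$ has $\Z(F)=1$, so $\Z(V)\cap F = 1$, whence $\Z(V)$ embeds into $V/F \cong G$ and is finite. For $F \cong \ZZ$ the action is a homomorphism $G \to \Aut(\ZZ)=\{\pm 1\}$, which for the groups at hand is non-trivial (a trivial action would exhibit a central unit of infinite order); choosing $g$ with $g\cdot f = f^{-1}$ gives $\Z(V)\cap F = 1$ again, so $\Z(V)$ is finite. In either case $\rank \Z(V)=0$, which is one half of \ref{R1}. Observe moreover that in the case $F\cong\ZZ$ one gets $(F^{ab})_G \cong C_2$, and since the non-trivial sign homomorphism factors through $G/G'$, the prime $2$ divides $\exp G/G'$; thus both \ref{R1} and \ref{E1} follow immediately in this sub-case.

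The main work — and the step I expect to be the real obstacle — is the non-abelian free case, where I must show that the $G$-module $F^{ab}\otimes\QQ$ has no trivial constituent (equivalently, that $(F^{ab})_G$ is finite) and that the torsion of $(F^{ab})_G$ has exponent dividing $\exp G/G'$. The conceptual tool I would use is that $V = F\rtimes G$ is virtually free, hence of virtual cohomological dimension at most $1$: this excludes from $\QQ G$ every simple component whose associated arithmetic unit group has larger dimension — in particular the Bianchi groups $\SL_2(\O)$ responsible for infinite abelianizations, of which $\SL_2(\ZZ[\sqrt{-2}])$ is the prototype — leaving only components with finite unit groups together with copies of $M_2(\QQ)$, for which $\GL_2(\ZZ)^{ab}$ is finite. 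This forces $\rank V/V' = \rank \Z(V) = 0$ and places the relevant $G$ among the short list of groups admitting a free normal complement from the cited literature, for which the exponent divisibility can be read off from the explicit descriptions of $V/V'$ obtained earlier in the paper.

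Alternatively, and more in the spirit of our own results, I would note that for all the groups in question the subgroup $\mathcal{B}$ generated by $G$, the bicyclic and the Bass units has finite index in $V$; then Theorem~\ref{theo:ranks} combined with $\rank\Z(V)=0$ gives $\rank V/V' = \rank \Z(V) = 0$ directly, while Proposition~\ref{prop:MainBicBass} controls the exponent of $\varphi(\mathcal B)$ and hence of $V/V'$. Either route reduces the proposition to the finiteness and exponent control of $(F^{ab})_G$, which is the crux of the argument.
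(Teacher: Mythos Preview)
Your proposal ultimately lands on the same proof the paper gives. The paper's argument (Remark~\ref{rem:freecomp}) is purely a case check: by Jespers's classification \cite{Jes94}, the only finite groups $G$ with a free normal complement in $\V(\ZZ G)$ are $S_3$, $D_8$, the dicyclic group $T$ of order $12$, and $P = C_4 \rtimes C_4$ (so the infinite-cyclic sub-case never actually occurs); for each of these four groups, Examples~\ref{ex:S3}, \ref{ex:D8}, \ref{ex:Order12}(iii) and \ref{ex:Ord16}(i) compute $V/V'$ explicitly, and one simply reads off \ref{R1} and \ref{E1}. Your coinvariant formula $V/V' \cong (F^{ab})_G \times G/G'$ is correct and a pleasant way to frame the problem, and the observation $\Z(V)\cap F=1$ for non-abelian free $F$ is clean; but in your third paragraph you, too, appeal to the classification and the explicit descriptions, so the substance is the same.

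Two of your attempted shortcuts do not close on their own. The vcd sketch is suggestive but not a proof as written: you would have to convert virtual freeness of $V$ into a genuine restriction on the simple components of $\QQ G$ and then into finiteness of $(F^{ab})_G$, and you do not --- in the end you invoke the classification anyway. (The paper observes in Section~\ref{sec:rems} that an abstract $F \rtimes C_2$ with $F$ free of rank $2$ and $C_2$ swapping the generators has trivial centre yet abelianization $C_\infty \times C_2$, so the semidirect-product shape alone is certainly insufficient.) Your ``alternative'' via Theorem~\ref{theo:ranks} and Proposition~\ref{prop:MainBicBass} does yield $\rank V/V' = 0$ once $\mathcal{B}$ has finite index, but it does \emph{not} give \ref{E1}: Proposition~\ref{prop:MainBicBass} only bounds $\exp \varphi(\mathcal{B})$ by $\exp G$, not by $\exp G/G'$, and an exponent bound on a finite-index subgroup of an abelian group says nothing about the exponent of the whole group. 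For \ref{E1} you are therefore back to the four explicit computations --- which is precisely the paper's proof.
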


The paper is structured as follows. We begin by discussing some groups and unit groups of orders which have and  some which do not have properties \eqref{var1} and \eqref{var2} in Section~\ref{sec:prelim}. In Section~\ref{sec:BicAndBass}, we study the behavior of Bass and bicyclic units under the abelianization of $\V(\ZZ G)$ and prove Theorem~\ref{theo:ranks} and Proposition \ref{prop:MainBicBass}. We then consider \ref{E1} and \ref{R1} for dihedral groups in Section \ref{sec:Dihedral} and prove Theorem~\ref{prop:Dihedral}. This is followed, in Section~\ref{sec:ExplicitAb}, by the explicit computations of the unit groups and their abelianizations, for integral group rings of groups of small orders, for many of which we obtain \ref{R1} and \ref{E1}. Also, it is shown that \ref{R2} does not hold in general. This section includes the proofs of Theorem~\ref{theo:SmallOrders} and  Proposition \ref{theo:FreeComplement}. We finish with some remarks in Section~\ref{sec:rems}.

\section{Preliminaries and examples}\label{sec:prelim}
Before turning our attention to unit groups of integral group rings, we shortly discuss more general groups and give positive as well as negative examples for properties \eqref{var1} and \eqref{var2}. 

	\begin{example}\label{ex:negativeexamplesto1}
		A counterexample to \eqref{var1} is given by the following: Let $N$ be the direct product of countably many Pr\"ufer $2$-groups $C_{2^\infty}$ and let $x$ be an involution acting on each of these direct factors by inversion. Set $G = N \rtimes \langle x \rangle$. Then $G$ has infinite center, consisting of $1$ and all the involutions in $N$, but has finite abelianization, as $G' = N$.
	\end{example}
	
	\begin{example}\label{ex:negativeexamplesto2}
		A counterexample to \eqref{var2} is already given by a non-abelian free group $F$ as its center is trivial, whereas the rank of its abelianization is the number of generators of $F$. A periodic group with this property is given by the natural wreath product $G = C_p \wr C_{p^\infty}$, where the center of $G$ is trivial while the abelianization is isomorphic to $C_{p^\infty}$. Also, as mentioned in the introduction, \eqref{var2} fails for $\SL_2(\ZZ[\sqrt{-2}])$. Moreover, by \cite[Table~1]{Scheutzow}, also full unit groups of matrix rings over rings of algebraic integers might fail to satisfy \eqref{var2}, for instance $\GL_2(\mathbb{Z}[\sqrt{-21}])$ has infinite abelianization, but finite center. A further example is the unit group of an integral group ring given in Example \ref{ex:Ord16}(ii). More examples  can be found in \cite[Examples 6.4, 6.10]{Wei77}. 
	\end{example}
	
Though \eqref{var1} and \eqref{var2} may not be true in general, it is still interesting to know the classes where they are.

\begin{example}
		As already stated in the introduction, \eqref{var1}  holds for unit groups of integral group rings of finite groups. Moreover, for any commutative ring $R$, \eqref{var1} holds for the groups $\GL_n(R)$, as we have an epimorphism $\det \colon \GL_n(R) \to \U(R)$ onto an abelian group and $\Z(\GL_n(R)) \cong \U(R)$.
\end{example}

\begin{example}
		Groups with an infinite simple derived subgroup satisfy \eqref{var2}. For, if $G$ is a group such that $G'$ is simple and is of finite index in $\V$, then clearly $\Z(G)$ is finite, as otherwise it would have infinite intersection with $G'$. Infinite simple groups have been at the center of attention recently \cite{Hyd17, Nek18, SWZ19, BZ20}. It has been shown by J. Hyde that if $G$ is a so-called vigorous group of homeomorphisms of a Cantor space, the condition of $G'$ being simple can be described in geometric terms (see e.g. \cite[Lemmas 3.3.5, 3.3.8, 3.4.2]{Hyd17} or \cite[Proof of Theorem 3.4]{BZ20}). Another criterion for groups of homeomorphisms of certain topological spaces to have simple commutator subgroup is described in \cite{Eps70}. This contributes an interesting class of examples where \eqref{var2} has a positive answer.
\end{example}
	
	We next discuss the situation for Wedderburn components of the rational group algebras. For many questions on units of integral groups rings certain simple components in $\QQ G$, so-called exceptional components, need special treatment, since no generic results are available. 
	
\begin{definition}\cite[Definition~11.2.2]{JdR1}\label{exc_comp} Let $D$ be a finite dimensional division algebra over $\mathbb{Q}$. The $n \times n$ matrix algebra $\operatorname{M}_n(D)$ is called an \emph{exceptional component}, if it is of one of the following types:
\begin{enumerate}
\item[(I)] $n = 1$ and $D$ is a non-commutative division algebra other than a totally definite quaternion algebra over a number field,
\item[(II)] $n = 2$ and $D$ is $\mathbb{Q}$, an imaginary quadratic extension of $\mathbb{Q}$ or a totally definite quaternion algebra with center $\mathbb{Q}$.
\end{enumerate} 
\end{definition}

The division algebras $D$ in (II) are exactly those division algebras over $\mathbb{Q}$ that have a maximal $\mathbb{Z}$-order with finite unit group \cite[Theorem~2.10]{BJJKT1}.  In general,
(\ref{var2}) fails for exceptional components of type (II), see $\GL_2(\mathbb{Z}[\sqrt{-21}])$ in Example~\ref{ex:negativeexamplesto2}. But for those exceptional components of type (II) that actually appear in the Wedderburn decomposition of rational group algebras $\mathbb{Q}G$ for finite groups $G$, it has a positive answer:

\begin{proposition}\label{prop:WedderbrunComp} Let $G$ be a finite group and let $A$ be a simple component of the rational group algebra $\mathbb{Q}G$, which is not exceptional of type (I) and let $\mathcal{O}$ be a maximal order of $A$. Set $ U = \U(\mathcal{O})$. Then $\rank U/U' = \rank \Z(U)$. In particular, \eqref{var1} and \eqref{var2} have positive answers for $U$. 
\end{proposition}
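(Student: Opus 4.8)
The plan is to isolate the torsion-free part of $U/U'$ arising from the center by means of the reduced norm, and then to reduce the whole statement to the finiteness of the abelianization of the norm-one units. Write $K = \Z(A)$, so that $A$ is a central simple $K$-algebra and, $\O$ being maximal, $\Z(\O) = \O_K$ is the ring of integers of $K$ and $\Z(U) = \O_K^{\times}$. By Dirichlet's unit theorem $\rank \Z(U) = \rank \O_K^{\times}$ is finite. Since $\rank U/U' \geqslant \rank \Z(U)$ always holds (see the introduction and \cite[Proposition~6.1]{BJJKT1}), it remains only to establish the reverse inequality.

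First I would use the reduced norm $\operatorname{nr} \colon A^{\times} \to K^{\times}$, which restricts to a homomorphism $U \to \O_K^{\times}$ with image $N$ of finite index in $\O_K^{\times}$ (Hasse--Schilling--Maass) and kernel $U^1 = \SL_1(\O)$. As $U/U^1 \cong N$ is abelian, $U' \leqslant U^1$, and from the short exact sequence $1 \to U^1/U' \to U/U' \to N \to 1$ of finitely generated abelian groups I obtain
\[
\rank U/U' = \rank N + \rank U^1/U' = \rank \O_K^{\times} + \rank U^1/U'.
\]
Because $(U^1)' \leqslant U'$, the group $U^1/U'$ is a quotient of $(U^1)^{\mathrm{ab}}$, so $\rank U^1/U' \leqslant \rank (U^1)^{\mathrm{ab}}$. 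Hence the proposition reduces to the claim that $\SL_1(\O)^{\mathrm{ab}}$ is finite.

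To settle this I would analyse $\SL_1(\O)$ as an arithmetic subgroup of the $K$-group $\SL_{1,A}$, whose behaviour is controlled by the sum of its local ranks over the archimedean places of $K$. When $A$ is not exceptional this sum is either $0$ --- precisely when $A = K$ or $A$ is a totally definite quaternion algebra, where $\SL_1(\O)$ is finite --- or at least $2$, where $\SL_1(\O)$ satisfies the congruence subgroup property and, being a finitely generated higher-rank arithmetic group, has finite abelianization; both conclusions are collected in \cite[Chapters~11 and 12]{JdR1}. In either situation $\SL_1(\O)^{\mathrm{ab}}$ is finite, which proves the statement for all non-exceptional components.

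The remaining, and genuinely delicate, case is that of a type (II) component, where the archimedean rank sum equals $1$: here $\SL_1(\O)^{\mathrm{ab}}$ can be infinite in general (as $\GL_2(\ZZ[\sqrt{-21}])$ in Example~\ref{ex:negativeexamplesto2} illustrates), so finiteness is not a formal consequence and must use that $A$ occurs in some $\QQ G$. The key input I would invoke is the classification showing that only finitely many type (II) algebras arise as Wedderburn components of rational group algebras; for each of these $K$ is $\QQ$ or imaginary quadratic (so $\rank \Z(U) = 0$), and the abelianization of the corresponding arithmetic group is known to be finite. This is exactly the main obstacle: the equality here rests on the restricted explicit list of type (II) components embeddable in group algebras rather than on any principle valid for arbitrary type (II) algebras. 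Finally, the assertions \eqref{var1} and \eqref{var2} follow at once, since $U$ is finitely generated and the established equality of ranks forces $\Z(U)$ and $U/U'$ to be finite simultaneously.
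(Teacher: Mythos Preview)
Your reduced-norm strategy is sound for the non-exceptional components and is more conceptual than the paper's bare citation of \cite{JOdRVG}. The gap is in the type (II) case. You reduce the proposition to the claim that $\SL_1(\O)^{\mathrm{ab}}$ is finite, and then assert that for the finitely many type (II) algebras occurring in group algebras ``the abelianization of the corresponding arithmetic group is known to be finite.'' But this is false if the arithmetic group in question is $\SL_1(\O)$: the component $A = \operatorname{M}_2(\QQ(\sqrt{-2}))$ does occur in rational group algebras (it is on the Eisele--Kiefer--Van Gelder list \cite{EKVG}), and for its maximal order $\O = \operatorname{M}_2(\ZZ[\sqrt{-2}])$ one has $\SL_1(\O) = \SL_2(\ZZ[\sqrt{-2}])$, whose abelianization is \emph{infinite} --- this is exactly the example from the introduction, \cite[Corollary~10.2]{Swan}. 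So your inequality $\rank U^1/U' \leqslant \rank (U^1)^{\mathrm{ab}}$, while correct, gives no information here.

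What saves the proposition is that $U' = \GL_2(\ZZ[\sqrt{-2}])'$ is strictly larger than $(U^1)' = \SL_2(\ZZ[\sqrt{-2}])'$: although $\SL_2(\ZZ[\sqrt{-2}])^{\mathrm{ab}}$ is infinite, $\GL_2(\ZZ[\sqrt{-2}])^{\mathrm{ab}}$ is finite \cite[Corollary~10.3]{Swan}. The paper therefore does not pass through $\SL_1(\O)$ at all in the type (II) case; it simply runs through the short explicit list of possible $D$'s and cites, for each, the known finiteness of $\GL_2(\O_D)^{\mathrm{ab}}$ (Swan for the fields, \cite[Corollary~3.14]{BJJKT1} for the three quaternion algebras). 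Your argument can be repaired by abandoning the $\SL_1$-reduction for type (II) and arguing directly about $U^{\mathrm{ab}}$ in those cases --- but then the reduced-norm machinery plays no role there, and you are back to the paper's case-by-case verification.
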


\begin{proof} If $A$ is not an exceptional component, then $\rank U/U' = \rank \Z(U)$ by \cite[Proof of Theorem 6.3]{JOdRVG}. So assume that $A$ is exceptional. If $A =\operatorname{M}_2(D)$ is of type (II) and actually appears in some rational group algebra $\mathbb{Q}G$, then the division algebra $D$ is one of the fields $\mathbb{Q}, \mathbb{Q}(i), \mathbb{Q}(\sqrt{-2}), \mathbb{Q}(\sqrt{-3})$ or one of the quaternion algebras $\left(\frac{-1,-1}{\mathbb{Q}}\right), \left(\frac{-1,-3}{\mathbb{Q}}\right), \left(\frac{-2,-5}{\mathbb{Q}}\right)$ by \cite[Theorems~3.1 \& 3.5]{EKVG}. The groups $\GL_2(\mathbb{Z})$, $\GL_2(\mathbb{Z}[i])$, $\GL_2(\mathbb{Z}[\sqrt{-2}])$ and $\GL_2(\mathbb{Z}[\frac{-1+\sqrt{-3}}{2}])$ have finite abelianization, see e.g.\ \cite[Corollaries 5.3, 10.3, 6.3]{Swan} for the imaginary quadratic fields. For the non-commutative cases see, e.g.\ \cite[Corollary~3.14]{BJJKT1}.  
\end{proof}

It follows that (\ref{var2}) has a positive answer for maximal orders in $\QQ G$, as for \cut groups, components of type (I) do not appear \cite[Proposition~6.10]{BJJKT1}. Yet for the order $\ZZ G$, the question is more subtle, since it does not reduce to maximal orders in the components, as we will see in Corollary~\ref{cor:D16+}.  On the other hand, as for many questions on division algebras, a general answer whether \eqref{var2} holds for orders in components of type (I) is not known.

\section{Bicyclic units and Bass units}\label{sec:BicAndBass}
Let $G$ be a finite group and let $V = \mathrm{V}(\mathbb{Z}G)$. We want to study the behavior of bicyclic and Bass units under the projection $\varphi: V \rightarrow V/V'$. We show that bicyclic units map to units of finite order and Bass units contribute a free abelian subgroup whose rank is the same as the rank of the center of $V$. Moreover, we show that given a bicyclic unit $u$, the order of $\varphi(u)$ divides $\exp G$ and this also holds for those Bass units which project to elements of finite order. This will also yield the proofs of Theorem~\ref{theo:ranks} and Proposition~\ref{prop:MainBicBass}. For elements $x,y$ in a group $X$, we denote $x$ conjugated by $y$ as $x^y:=y^{-1}xy$ and $[x,y]: = x^{-1}x^y$. Further, if $x,y \in X$ are conjugate in $X$, we write $x \sim_X y$. Note that $u\sim_V v$ clearly implies $\varphi(u) = \varphi(v)$.

\subsection{Bicyclic units}
We will use the following notation, which is a slight modification of that used in \cite{JdR1}. For a subgroup $H$ of $G$ and an element $g$ in $G$, we write $\widetilde{H} = \sum_{h \in H} h \in \mathbb{Z}G$ and $\widetilde{g} = \widetilde{\langle g \rangle}$. Moreover for $g,h \in G$ we denote 
\[b(g,h	): = 1 + (1-h)g\widetilde{h}, \]
a \emph{bicyclic unit} in $\mathrm{V}(\mathbb{Z}G)$. As $\tilde{h}(1-h) = 0$, it is easy to see that $b(g,h)^k = 1 + k(1-h)g\widetilde{h}$ for $k \in \mathbb{Z}$, in particular $b(g,h)^{-1} = 1 - (1-h)g\widetilde{h}$. A  bicyclic unit $b(g,h)$ is trivial, if and only if $g$ normalizes $\langle h \rangle$, and is of infinite order otherwise.\\

We show that any bicyclic unit $u$ of $V$ projects to an element of finite order in $V/V'$. In fact, the order of $\varphi(u)$ divides the order of an element in $G$.

\begin{proposition}\label{prop:bicyclic}
		Let $g, h \in G$ be such that $h$ is of order $n$. Then 
		\[\prod_{k = 1}^n [b(g,h)^{-1}, h^k] = b(g,h)^{n}.\]
		In particular, $\varphi(b(g,h))^n = 1$.
	\end{proposition}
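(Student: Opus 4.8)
The plan is to work directly in the group ring and exploit the nilpotency built into the bicyclic unit. Write $a := (1-h)g\widetilde{h}$, so that $b := b(g,h) = 1 + a$, $b^{-1} = 1 - a$, and, as recorded just before the statement, $b^k = 1 + ka$ for every $k \in \ZZ$. The single identity driving the whole computation is $\widetilde{h}(1-h) = 0$; from it I get $a^2 = 0$ and, more usefully, $\widetilde{h}\,h^j = \widetilde{h}$ for every integer $j$.

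First I would compute one commutator. Conjugating $b^{-1}$ by $h^k$ and using $\widetilde{h}\,h^k = \widetilde{h}$ gives $(b^{-1})^{h^k} = 1 - h^{-k}a$, so that $[b^{-1},h^k] = b\,(b^{-1})^{h^k} = (1+a)(1 - h^{-k}a)$. Expanding this and discarding the term $a\,h^{-k}a$, which vanishes because $\widetilde{h}\,h^{-k}(1-h) = \widetilde{h}(1-h) = 0$, the product collapses to $[b^{-1},h^k] = 1 + (1 - h^{-k})a$. I then set $c_k := (1-h^{-k})a$, so each factor of the product is $1 + c_k$.

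The key step is to show that the product of commutators linearizes into a sum. I would verify that $c_k c_j = 0$ for all $k,j$: indeed $a(1-h^{-j}) = (1-h)g(\widetilde{h} - \widetilde{h}\,h^{-j}) = 0$, so any product of two $c$'s is zero. Expanding $\prod_{k=1}^{n}(1+c_k)$ over subsets of $\{1,\dots,n\}$, every term with two or more factors dies, leaving only the empty product and the singletons, whence $\prod_{k=1}^{n}[b^{-1},h^k] = 1 + \sum_{k=1}^{n} c_k$. Finally $\sum_{k=1}^{n}(1-h^{-k}) = n - \widetilde{h}$, since $\{h^{-k}\}_{k=1}^{n}$ runs over all of $\langle h\rangle$, and $\widetilde{h}\,a = \widetilde{h}(1-h)g\widetilde{h} = 0$; therefore $\sum_{k=1}^{n} c_k = (n-\widetilde{h})a = na$, and the product equals $1 + na = b^n$, as claimed.

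The ``in particular'' is then immediate: each factor $[b^{-1},h^k]$ lies in $V'$, so $b^n \in V'$, giving $\varphi(b)^n = \varphi(b^n) = 1$. I expect the only delicate point to be the careful bookkeeping of the several annihilation relations stemming from $\widetilde{h}(1-h)=0$ — above all, making precise that all pairwise products $c_k c_j$ vanish, which is exactly what turns an a priori unwieldy product of commutators into a single linear term.
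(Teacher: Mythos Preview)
Your proof is correct and follows essentially the same route as the paper's: both compute each commutator explicitly to be of the form $1 + (\text{something})(1-h)g\widetilde{h}$ using $\widetilde{h}h^k=\widetilde{h}$, then exploit $\widetilde{h}(1-h)=0$ to linearize the product into a sum that collapses to $1 + n(1-h)g\widetilde{h}$. Your packaging via $c_k := (1-h^{-k})a$ and the observation $c_kc_j=0$ is a tidy way to phrase exactly the same annihilation the paper uses; noting that $(1-h^{-k})(1-h) = 1 - h - h^{-k} + h^{-k+1}$ makes the match with the paper's formula for $[b^{-1},h^k]$ explicit.
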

	
	\begin{proof} 
		Observe that $\tilde{h}h^k = \tilde{h}$ for any $k \in \mathbb{Z}$ and that for any element $w \in \mathbb{Z}\langle h \rangle$ of augmentation $0$, we have $\widetilde{h}w = 0$. This gives
		
		\begin{align*}
		[b(g,h)^{-1}, h^k] =& (1+(1-h)g\widetilde{h})h^{-k}(1-(1-h)g\widetilde{h})h^k \\
		=& (1+(1-h)g\widetilde{h})(1-(h^{-k}-h^{-k+1})g\widetilde{h})\\
		=& 1 + (1 - h - h^{-k} + h^{-k+1})g\widetilde{h}.
		\end{align*}
		
		Hence, we obtain
		
		\begin{align*}
		\prod_{k = 1}^n [b(g,h)^{-1}, h^k] &= \prod_{k = 1}^n [1+(1-h- h^{-k} + h^{-k+1})g\widetilde{h}] \\ 
		&= 1 + \sum_{k=1}^n (1-h- h^{-k} + h^{-k+1})g\widetilde{h} \\
		&= 1 + n(1-h)g\widetilde{h} = b(g,h)^{n}. \qedhere
		\end{align*} 
		
\end{proof}

\subsection{Bass units}
If $g \in G$ is of order $n$ and $k$, $m$ are positive integers such that $k$ is coprime to $n$ and $k^m \equiv 1 \mod n$, then
\[u_{k,m}(g):= (1+g+g^2+...+g^{k-1})^m + \frac{1-k^m}{n}\widetilde{g} \]
is a \emph{Bass unit}. A Bass unit $u_{k,m}(g)$, based on a non-trivial element $g$ of $G$ is trivial, i.e., an element of $G$, if and only if $k \equiv \pm 1 \mod n$, and is a unit of infinite order otherwise. The following rules are recorded in \cite{JdR1}, but more visible in \cite{JOdRVG}. If $k, l, m, m', i$ are positive integers such that $k$ and $l$ are coprime to $n$ and $k^m \equiv l^m \equiv k^{m'} \bmod n$, then

 \begin{align}
 u_{k,m}(g) &= u_{l,m}(g), \qquad \text{if} \ k \equiv l \bmod n; \\
 u_{k,m}(g)u_{k,m'}(g) &= u_{k,m+m'}(g), \\
 u_{k,m}(g)u_{l,m}(g^k) &= u_{kl,m}(g), \label{GetBassDown} \\
 u_{1,m}(g) &= 1, \\
 u_{-1,m}(g) &= (-g)^{-m}, \\
 u_{k,m}(g)^i &= u_{k,im}(g), \label{PowersBass} \\
 u_{k,m}(g)^{-1} &= u_{k^{-1},m}(g^k), \qquad \text{where} \ k \cdot k^{-1} \equiv 1 \bmod n, \label{InverseBass} \\ 
 u_{n-k,m}(g) &= u_{k,m}(g)g^{-km}, \qquad \text{if} \ (-1)^m \equiv 1 \bmod n \label{PutMinusBass}.   
 \end{align}

Our goal here is to show that the group generated by the projections of the Bass units and trivial units, i.e., the elements of $G$, in $V/V'$ has the same rank as the center of $V$. 
Moreover, if  for a Bass unit $u$, the element $\varphi(u)$ has finite order, say $n$, then $n$ is the order of an element in $G$.

\begin{lemma}\label{lem_BassTorsion}
Let $g \in G$ be an element of order $n$ and let $l$, $m$ be integers such that $l^m \equiv 1 \mod n$. Assume that $g \sim_G g^l$, say $g^h = g^l$ for some $h \in G$, and let $s$ be the order of $l$ in $\U(\mathbb{Z}/n\mathbb{Z})$. Then 
\[\prod_{i=1}^{s-1} [u_{l,m}(g)^{-1}, h^i] = u_{l,m}(g)^s.\]
In particular, $\varphi(u_{l,m}(g))^s = 1$. 
\end{lemma}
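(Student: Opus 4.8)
The statement for Bass units is structurally identical to Proposition~\ref{prop:bicyclic} for bicyclic units: I want to realize a power of a unit as a product of commutators, which then vanishes in the abelianization. The plan is therefore to imitate that proof. The commutator $[u_{l,m}(g)^{-1}, h^i]$ equals $u_{l,m}(g) \cdot (u_{l,m}(g)^{-1})^{h^i}$, and conjugation by $h$ acts on the relevant data by the substitution $g \mapsto g^h = g^l$. The key point I would isolate first is that conjugating a Bass unit by $h$ sends it to another Bass unit in a controlled way, namely $u_{l,m}(g)^{h^i} = u_{l,m}(g^{l^i})$, because $h^i$ conjugates $g$ to $g^{l^i}$ and the defining expression for $u_{l,m}$ depends only on $g$ through the powers that appear. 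So the whole product $\prod_{i=1}^{s-1}[u_{l,m}(g)^{-1}, h^i]$ telescopes into an alternating product of the units $u_{l,m}(g^{l^i})$ as $i$ ranges over residues, and since $l$ has order $s$ modulo $n$, the exponents $l^i$ cycle through a coset of $\langle l\rangle$ in $\U(\ZZ/n\ZZ)$ and close up after $s$ steps.

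\textbf{Key steps, in order.} First I would expand a single commutator and use the substitution rule above to write $[u_{l,m}(g)^{-1}, h^i] = u_{l,m}(g) \, u_{l,m}(g^{l^i})^{-1}$, taking care that the inverse is handled by rule~\eqref{InverseBass}. Second, I would try to collapse the telescoping product. The natural tool here is the multiplicativity relation~\eqref{GetBassDown}, $u_{k,m}(g)u_{l,m}(g^k) = u_{kl,m}(g)$, which is exactly the kind of identity that lets one assemble Bass units based on $g, g^l, g^{l^2}, \dots$ into a single Bass unit with multiplied first index. Repeatedly applying~\eqref{GetBassDown} to the chain $u_{l,m}(g) u_{l,m}(g^l) u_{l,m}(g^{l^2}) \cdots u_{l,m}(g^{l^{s-1}})$ should produce $u_{l^s, m}(g)$; since $l^s \equiv 1 \bmod n$, rule~(1.4) (i.e. $u_{1,m}(g)=1$, or its $u_{l^s \equiv 1}$ variant via~(1.1)) would show this product is trivial. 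The inverses in the commutators then reassemble, after reindexing, into the single power $u_{l,m}(g)^s$ via relation~\eqref{PowersBass}. The final line, $\varphi(u_{l,m}(g))^s = 1$, follows because $\varphi$ kills every commutator, so $\varphi$ applied to the left-hand side is $1$ while $\varphi$ applied to the right-hand side is $\varphi(u_{l,m}(g))^s$.

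\textbf{Main obstacle.} The routine expansion of the commutator is mechanical, so the real care is in the bookkeeping of the telescoping product: one must verify that the conjugates $u_{l,m}(g)^{h^i}$ really are the Bass units $u_{l,m}(g^{l^i})$ and that the multiplication rule~\eqref{GetBassDown} applies with the correct pairing of indices $(l, g^{l^i})$ at each stage, since~\eqref{GetBassDown} requires the second factor to be based on $g$ raised to the first index of the first factor. Getting the product to genuinely telescope rather than merely partially cancel is where the order-$s$ hypothesis on $l$ and the congruence $l^m \equiv 1 \bmod n$ must be used in concert; in particular one needs $l^m \equiv (l^i)^{m} \cdot (\text{unit})$ to stay compatible with the exponent $m$ throughout, so that every factor in the chain is a bona fide Bass unit with the same $m$. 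I expect this index-matching — confirming that successive applications of~\eqref{GetBassDown} chain up exactly to $u_{l^s,m}(g)$ — to be the one genuinely delicate step, with everything else being formal.
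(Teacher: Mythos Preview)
Your proposal is correct and follows essentially the same route as the paper: expand each commutator as $u_{l,m}(g)\,(u_{l,m}(g)^{-1})^{h^i}$, identify $(u_{l,m}(g))^{h^i} = u_{l,m}(g^{l^i})$, use that all these Bass units lie in the commutative ring $\ZZ\langle g\rangle$ to separate off $u_{l,m}(g)^{s-1}$, and then telescope the remaining product via~\eqref{GetBassDown}. The only cosmetic difference is that you telescope $\prod_{i=0}^{s-1} u_{l,m}(g^{l^i})$ to $u_{l^s,m}(g)=1$ and then invert, whereas the paper applies~\eqref{InverseBass} first and telescopes the product of $u_{l^{-1},m}(g^{l^{i+1}})$ directly to $u_{l,m}(g)$; your reference to~\eqref{PowersBass} at the end is unnecessary, since what you actually need there is just $u^{s-1}\cdot u = u^s$.
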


\begin{proof} We write $l^{-1}$ for an integer such that $l\cdot l^{-1} \equiv 1 \mod n$. First of all note that $u_{l,m}(g)^{h^i} = u_{l,m}(g^{l^i})$. So using \eqref{InverseBass} in the last equality we obtain 

\vspace{-.2cm}
\begin{align}
\prod_{i=1}^{s-1} [u_{l,m}(g)^{-1}, h^i]& = 
	\prod_{i=1}^{s-1}u_{l,m}(g)(u_{l,m}(g)^{-1})^{h^i}=u_{l,m}(g)^{s-1}\prod_{i=1}^{s-1}(u_{l,m}(g)^{-1})^{h^i} \nonumber \\
&= u_{l,m}(g)^{s-1} \prod_{i=1}^{s-1} u_{l,m}(g^{l^i})^{-1} = u_{l,m}(g)^{s-1} \prod_{i=1}^{s-1} u_{l^{-1},m}(g^{l^{i+1}}). \label{BassProof1}
\end{align}
Note that $g^{l^{m-i}} = g^{l^{-i}}$. So first reordering factors and then using \eqref{GetBassDown} over and over again, we have

\vspace{-.2cm}
\begin{align*}
\prod_{i=1}^{s-1} u_{l^{-1},m}(g^{l^{i+1}})& = u_{l^{-1},m}(g) u_{l^{-1},m}(g^{l^{s-1}}) u_{l^{-1},m}(g^{l^{s-2}})...u_{l^{-1},m}(g^{l^2}) \\
&=  u_{l^{-1},m}(g) u_{l^{-1},m}(g^{l^{-1}}) u_{l^{-1},m}(g^{l^{-2}})...u_{l^{-1},m}(g^{l^{-(s-2)}}) \\
&=  u_{l^{-2},m}(g) u_{l^{-1},m}(g^{l^{-2}})...u_{l^{-1},m}(g^{l^{-(s-2)}}) = u_{l^{-3},m}(g) u_{l^{-1},m}(g^{l^{-3}})...u_{l^{-1},m}(g^{l^{-(s-2)}}) \\
&= ... = u_{l^{-(s-2)},m}(g) u_{l^{-1},m}(g^{l^{-(s-2)}}) = u_{l^{-(s-1)},m}(g) =u_{l,m}(g).
\end{align*}
Plugging this equality into \eqref{BassProof1}, we obtain the claim of the lemma.
\end{proof}

The following fact follows directly from the formulas on Bass units, but is quite useful for us.
\begin{lemma}\label{lem_lkBass}
Let $g \in G$ be of order $n$ and let $l$ be an integer such that $g \sim_G g^l$. Assume that $k$ is an integer coprime to $n$ and $m$ is a positive integer satisfying $k^m \equiv l^m \equiv 1 \mod n$. Then
\[\varphi(u_{k,m}(g)u_{l,m}(g)) = \varphi(u_{kl,m}(g)). \]
\end{lemma}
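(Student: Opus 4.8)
The plan is to reduce everything to the multiplication rule \eqref{GetBassDown} together with two soft facts: that $V/V'$ is abelian, and that conjugate units have the same image under $\varphi$. Since $\varphi$ lands in an abelian group, $\varphi(u_{k,m}(g)u_{l,m}(g)) = \varphi(u_{k,m}(g))\varphi(u_{l,m}(g))$, and the rule \eqref{GetBassDown} already provides a relation of the shape $u_{\bullet,m}(g)\,u_{\bullet,m}(g^{\bullet}) = u_{kl,m}(g)$. The only discrepancy between the two sides is that \eqref{GetBassDown} forces a Bass unit based at a \emph{twisted} element $g^{\bullet}$, whereas the claim only involves Bass units based at $g$ itself. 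The entire content is therefore to convert a Bass unit based at the twist back to one based at $g$ without changing its image under $\varphi$.

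First I would apply \eqref{GetBassDown} with the roles of $k$ and $l$ interchanged, namely
\[ u_{l,m}(g)\,u_{k,m}(g^l) = u_{kl,m}(g). \]
This is legitimate: $k$ and $l$ are both coprime to $n$ (being units modulo $n$ by the hypotheses $k^m\equiv l^m\equiv 1$), the element $g^l$ again has order $n$, and $(kl)^m = k^m l^m \equiv 1 \bmod n$, so all three Bass units are defined. Next I would use the conjugation behavior of Bass units, $u_{k,m}(g)^x = u_{k,m}(g^x)$ for $x \in G$. Choosing $h \in G$ with $g^h = g^l$ (which exists since $g \sim_G g^l$), this gives $u_{k,m}(g^l) = u_{k,m}(g)^h \sim_V u_{k,m}(g)$, and hence $\varphi(u_{k,m}(g^l)) = \varphi(u_{k,m}(g))$. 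Applying $\varphi$ to the displayed relation and substituting this equality yields
\[ \varphi(u_{l,m}(g))\,\varphi(u_{k,m}(g)) = \varphi(u_{kl,m}(g)), \]
from which the claim follows at once by commutativity of $V/V'$.

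The point I expect to require the most care, and the reason the proof is not a one-line invocation of \eqref{GetBassDown}, is the choice of \emph{which} variable to place in the inner (twisted) slot. The hypothesis available is $g \sim_G g^l$, not $g \sim_G g^k$; using \eqref{GetBassDown} in its stated form $u_{k,m}(g)u_{l,m}(g^k) = u_{kl,m}(g)$ would instead demand $g \sim_G g^k$, which is not assumed. So the interchange of $k$ and $l$ before applying the rule is the essential step, ensuring that the element whose Bass unit must be untwisted is exactly $g^l$, the one controlled by the conjugacy hypothesis. Everything else is formal manipulation inside the abelian group $V/V'$.
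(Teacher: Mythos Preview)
Your proof is correct and essentially identical to the paper's: both use the conjugacy $g^h=g^l$ to deduce $\varphi(u_{k,m}(g^l))=\varphi(u_{k,m}(g))$, then apply \eqref{GetBassDown} in the form $u_{l,m}(g)\,u_{k,m}(g^l)=u_{lk,m}(g)$ together with commutativity of $V/V'$. Your observation that one must swap $k$ and $l$ in \eqref{GetBassDown} so that the twist is by $l$ (the exponent controlled by the conjugacy hypothesis) is precisely the point, and is what the paper does as well.
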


\begin{proof}
Let $h \in G$ such that $g^h = g^l$. Then $u_{k,m}(g)^h = u_{k,m}(g^l)$, so $\varphi(u_{k,m}(g)) = \varphi(u_{k,m}(g^l))$. Hence using \eqref{GetBassDown} we have
\[\varphi(u_{k,m}(g)u_{l,m}(g)) = \varphi(u_{k,m}(g^l)u_{l,m}(g)) = \varphi(u_{kl,m}(g)).  \qedhere\]
\end{proof}

We are now ready to prove our main observation of this section. 

\begin{proposition}\label{prop:Bass}
Let $\mathcal{B}_{1}$ be the subgroup of $V$ generated by the Bass units and trivial units in $V$. Then $\rank \varphi(\mathcal{B}_{1}) = \rank \Z(V)$. Moreover, $\exp \varphi(\mathcal{B}_1)$ divides $\exp G$.
\end{proposition}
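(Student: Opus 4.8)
The plan is to prove the two assertions of Proposition~\ref{prop:Bass} separately: first the statement about the exponent, which follows almost immediately from the preceding lemmas, and then the harder rank equality.

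For the exponent claim, recall that $\mathcal{B}_1$ is generated by the trivial units (elements of $G$) and the Bass units. The trivial units project into $\varphi(G)$, which is a quotient of $G/G'$, so their orders divide $\exp G/G'$ and hence $\exp G$. For a Bass unit $u_{k,m}(g)$ with $g$ of order $n$, either $\varphi(u_{k,m}(g))$ has infinite order (in which case it contributes nothing to the exponent) or it has finite order. To control the finite-order case I would invoke Lemma~\ref{lem_BassTorsion}: whenever $g \sim_G g^l$, the projection $\varphi(u_{l,m}(g))$ has order dividing $s$, the multiplicative order of $l$ modulo $n$, and $s$ divides $|\U(\ZZ/n\ZZ)|$-type data tied to $n = \operatorname{ord}(g)$; in particular $s \mid \varphi(\operatorname{ord}(g))$-style bounds show the order divides $\exp G$. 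Since $\varphi(\mathcal{B}_1)$ is abelian and generated by these projections, $\exp\varphi(\mathcal{B}_1)$ is the lcm of the (finite) orders of the generators, each dividing $\exp G$, so the first claim follows.

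For the rank equality, the inequality $\rank \varphi(\mathcal{B}_1) \leqslant \rank \Z(V)$ is the easy direction: any group homomorphism cannot increase torsion-free rank beyond that of the target, and more precisely the abelianization $V/V'$ has $\rank V/V' \geqslant \rank \Z(V)$ already recorded in the introduction via $K$-theory, with the relevant free part coming from the center. The substantive direction is $\rank\varphi(\mathcal{B}_1) \geqslant \rank\Z(V)$. Here the strategy is to use the fact (underlying \cite[Theorem~6.3]{JOdRVG}) that the Bass units generate a subgroup of finite index in a suitable central-by-something piece, so that the images of the Bass units already span the free part of $\Z(V)$ up to finite index. Concretely, I would show that the central units of $V$ are, up to finite index, products of (symmetrizations of) Bass units, so that projecting to $V/V'$ the free abelian rank realized by $\varphi(\mathcal{B}_1)$ matches $\rank\Z(V)$. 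Lemma~\ref{lem_lkBass} is the key technical tool: it lets one manipulate products $\varphi(u_{k,m}(g)u_{l,m}(g)) = \varphi(u_{kl,m}(g))$ modulo commutators, which is exactly what is needed to relate arbitrary Bass-unit products to the standard generators of the group of central units constructed from Bass units.

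The main obstacle, I expect, is making precise the claim that the Bass units capture the full torsion-free rank of the center. One cannot simply assert that Bass units generate a finite-index subgroup of $\Z(V)$ for all $G$; rather the correct statement is about the rank of their image, and the cleanest route is to cite the rank computation from \cite[Proof of Theorem~6.3]{JOdRVG} for the non-exceptional part together with a counting of the free rank coming from the rational representations of $G$. The delicate point is that conjugating a Bass unit $u_{k,m}(g)$ by an element $h$ realizing $g \sim_G g^l$ does not change its image in $V/V'$, yet changes the unit itself; the cancellation engineered in Lemma~\ref{lem_BassTorsion} and the multiplicativity in Lemma~\ref{lem_lkBass} together force the images that would be ``extra'' torsion to collapse, leaving exactly a free abelian group of rank $\rank\Z(V)$. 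Pinning down that no torsion-free rank is lost in the projection $\varphi$ — i.e.\ that the finite-index subgroup of central Bass units does not degenerate under abelianization — is the heart of the argument.
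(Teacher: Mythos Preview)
Your proposal has two genuine gaps.

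\textbf{The exponent argument cannot be done first, and the reasoning you give is wrong.} You write that a Bass unit with infinite-order image ``contributes nothing to the exponent'' and that ``$\exp\varphi(\mathcal{B}_1)$ is the lcm of the (finite) orders of the generators''. Both are false: in an abelian group, a difference of two infinite-order generators can be torsion of any order (e.g.\ $\ZZ\times C_4$ generated by $(1,0)$ and $(1,1)$). So bounding the orders of only those generators that happen to be torsion does not bound $\exp\varphi(\mathcal{B}_1)$. Moreover, Lemma~\ref{lem_BassTorsion} applies only when $g\sim_G g^l$; you have not shown that a Bass unit $u_{k,m}(g)$ with torsion image must satisfy $g\sim_G g^{\pm k}$. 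In the paper this characterization is \emph{deduced from} the rank equality: once one knows that a specific set of $n_{\mathbb{R}}-n_{\mathbb{Q}}$ Bass-unit images generates a free abelian group of that rank, and that every other Bass-unit image differs from a word in these by a factor of the form $\varphi(u_{l,m}(g))$ with $g\sim_G g^{\pm l}$ or by an element of $\varphi(G)$, then one can bound the exponent via Lemma~\ref{lem_BassTorsion} and \eqref{PutMinusBass}. The observation that the conjugating element $h$ with $g^h=g^k$ has order divisible by $s$ is what ties $s$ to $\exp G$; your ``$s\mid |\U(\ZZ/n\ZZ)|$-type data'' does not.

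\textbf{You have the two rank inequalities reversed.} You call $\rank\varphi(\mathcal{B}_1)\leqslant\rank\Z(V)$ the easy direction, but the justification you offer (``a homomorphism cannot increase rank beyond the target'' and ``$\rank V/V'\geqslant\rank\Z(V)$'') proves nothing about $\varphi(\mathcal{B}_1)$: there is no natural map $\varphi(\mathcal{B}_1)\to\Z(V)$, and the inequality for $V/V'$ goes the wrong way. In the paper this is the \emph{hard} direction, proved by an explicit count: $\rank\Z(V)=n_{\mathbb{R}}-n_{\mathbb{Q}}$, and within each $\mathbb{Q}$-class of $g$ the images $\varphi(u_{k,m_n}(g))$ are shown, using Lemmas~\ref{lem_BassTorsion}, \ref{lem_lkBass} and \eqref{PutMinusBass}, to span (modulo torsion) a group of rank at most the number of $\mathbb{R}$-classes in that $\mathbb{Q}$-class minus one. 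Conversely, the inequality $\rank\varphi(\mathcal{B}_1)\geqslant\rank\Z(V)$, which you call substantive, is handled quickly in the paper via $K$-theory: one surjects $\varphi(\mathcal{B}_1)$ onto the image of $\mathcal{B}_1$ in $K_1(\ZZ G)$, which has finite index there by \cite[Corollary~11.1.3]{JdR1}, and $\rank K_1(\ZZ G)=\rank\Z(V)$. Your idea of reaching this via ``symmetrizations of Bass units generating the center up to finite index'' is in the right spirit and is essentially the mechanism behind the $K_1$ result, but you would still need the upper bound, which your sketch does not address.
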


\begin{proof}
We first show that $\rank \Z(V) \leqslant \rank \varphi(\mathcal{B}_1)$. Denote by $\GL(\ZZ G)$ the group of automorphisms of the free $\ZZ G$-module of countable rank, i.e., the group of ``infinite matrices'' over $\ZZ G$ which are invertible. The group $V$ embeds in $\GL(\ZZ G)$ by acting on a rank $1$ submodule. Recall that $K_1(\ZZ G) = \GL(\ZZ G)/\GL(\ZZ G)'$ and note $V' \leqslant \GL(\ZZ G)'$. We hence have a (non-exact) sequence of surjections
\[\mathcal{B}_1  \twoheadrightarrow \varphi(\mathcal{B}_1) (= \mathcal{B}_1/(\mathcal{B}_1 \cap V') ) 
\twoheadrightarrow \mathcal{B}_1/(\mathcal{B}_1 \cap \GL(\ZZ G)'). \]
The last term is a subgroup of $K_1(\ZZ G)$ which, by \cite[Corollary 11.1.3]{JdR1}, has finite index in $K_1(\mathbb{Z}G)$.
Hence $\rank K_1(\mathbb{Z}G) \leqslant \rank \varphi(\mathcal{B}_1)$. Also, by \cite[Corollary 9.5.10]{JdR1}, we have $\rank K_1(\mathbb{Z}G) = \rank \Z(V)$.  Therefore, $\rank \Z(V) \leqslant \rank \varphi(\mathcal{B}_1)$.

We next show that $\rank \varphi(\mathcal{B}_1) \leqslant \rank \Z(V)$. Note that the rank of the center of $V$ equals $n_\mathbb{R} - n_\mathbb{Q}$, where $n_\mathbb{R}$ denotes the number of $\mathbb{R}$-classes in $G$ and $n_\mathbb{Q}$ denotes the number of $\mathbb{Q}$-classes in $G$.
Here an $\mathbb{R}$-class is a set of the form $g^G \cup (g^{-1})^G$ for $g \in G$ and a $\mathbb{Q}$-class of an element $g$ is defined as $\cup_{(i, |g|) = 1} (g^i)^G$ (so $n_\mathbb{Q}$ equals the number of conjugacy classes of cyclic subgroups in $G$).
A proof for this well-known fact can be found e.g. in \cite[Proposition 2.1]{BBM20}.

Note that if for some $g, g' \in G$ we have $g\sim_G g'$, then 
$u_{k,m}(g)\sim_V u_{k,m}(g')$ and so $\varphi(u_{k,m}(g)) = \varphi(u_{k,m}(g'))$, for any suitable integers $k,m$. Also, in view of \eqref{GetBassDown} the group generated by the elements $u_{k,m}(g)$, where $k$ and $m$ vary over all admissible integers, is the same as the group generated by the elements $u_{k,m}(g^i)$, for any integer $i$ coprime to $n$. In particular, the group generated by the image of Bass units
 $u_{k,m}(g)$ in $V/V'$, where $k$ and $m$ vary over all admissible integers, is the same for any choice of $g$ in a $\mathbb{Q}$-conjugacy class. 

  Let $g\in G$ and let $W$ be the subgroup of $V/V'$ generated by all elements of the form $\varphi(u_{k,m}(g))$, where $k$ and $m$ take all admissible values. It hence suffices to show that the rank of $W$ is at most the number of $\mathbb{R}$-classes contained in the $\mathbb{Q}$-class of $g$ subtracted by $1$, as our goal is to obtain at most the number $n_\mathbb{R} - n_\mathbb{Q}$ when varying over all $g \in G$.
  
  Let $m_n$ be the number of positive integers  smaller than $n$ which are coprime to $n$, i.e., the value of Euler-totient function on $n$. Hence, $k^{m_n} \equiv 1 \mod n$, for any integer $k$ coprime to $n$. Moreover, in view of \eqref{PowersBass}, for any $m$ satisfying $k^{m} \equiv 1 \mod n$, we observe that $u_{k,m}(g)$ has finite order modulo $\langle u_{k,m_n}(g) \rangle$, as $m_n$ and $m$ are both multiples of  the multiplicative order of $k$ modulo $n$. It hence suffices to consider only the elements of the form $u_{k,m_n}(g)$.
  
  Let $L = \{1 \leqslant l \leqslant n \ | \ g \sim_G g^l \}$. Then the number of real classes in the rational class of $g$ equals $\frac{m_n}{|L|}$, if $-1 \in L$, and $\frac{m_n}{2|L|}$ otherwise. First note that, in view of Lemma~\ref{lem_BassTorsion}, if $l \in L$, then $\varphi(u_{l,m_n}(g))$ has finite order. Now, as $u_{n-k,m}(g) = u_{k,m}(g)g^{-km}$ (by \eqref{PutMinusBass}), we have that $\varphi(u_{k,m_n}(g))$ and $\varphi(u_{-k,m_n}(g))$ differ only by an element of finite order. Moreover, by Lemma~\ref{lem_lkBass}, this is also the case for $\varphi(u_{k,m_n}(g))$ and $\varphi(u_{lk,m_n}(g))$ for any $l \in L$, as $\varphi(u_{l,m_n}(g))$ has finite order. Therefore, the number of elements of the form $\varphi(u_{k,m_n}(g))$ which generate infinite cyclic groups, which could be different modulo elements of finite order is at most $(\frac{m_n}{|L|}-1)$, if $-1 \in L$ and $(\frac{m_n}{2|L|}-1)$, otherwise. This means exactly that the rank of $W$ is at most the number of $\mathbb{R}$-classes contained in the $\mathbb{Q}$-class of $g$ subtracted by 1. We thus obtain that $\rank \varphi(\mathcal{B}_1) \leqslant n_\mathbb{R} - n_\mathbb{Q} = \rank \Z(V)$. As we obtained $\rank \Z(V) \leqslant \rank \varphi(\mathcal{B}_1)$ already before, we have $\rank \Z(V) = \rank \varphi(\mathcal{B}_1)$. 
  
This equality of ranks together with the arguments in the last paragraph shows that $\varphi(u_{k,m}(g))$, for suitable $k$ and $m$, has finite order, if and only if $g \sim_G g^{\pm k}$. Moreover, by Lemma~\ref{lem_BassTorsion}, if $g$ is of order $n$ and $g \sim_G g^k$, then the order of $\varphi(u_{k,m}(g))$ divides $s$, the order of $k$ in $\U(\mathbb{Z}/n\mathbb{Z})$. As there is some $h \in G$ such that $g^h = g^k$, we conclude that the order of $h$ is divisible by $s$ and so the order of $\varphi(u_{k,m}(g))$ divides $\exp G$. Now consider $u_{n-k,m}(g)$ under the assumption $g \sim_G g^k$. As $u_{n-k,m}(g) = u_{k,m}(g)g^{-km}$ by \eqref{PutMinusBass} we have $\varphi(u_{n-k,m}(g)) = \varphi(u_{k,m}(g))\varphi(g^{-km})$. By the observation before, the order of the last expression divides $\exp G$. Hence, we obtain that $\exp \varphi(\mathcal{B}_1)$ divides $\exp G$.  
\end{proof}

\begin{proof}[Proof of Proposition~\ref{prop:MainBicBass}]
This is an immediate consequence of Propositions~\ref{prop:bicyclic} and \ref{prop:Bass}. 
\end{proof}

\subsection{Finite index subgroups}
The next easy general group-theoretical observation allows us to prove Theorem \ref{theo:ranks}.

\begin{lemma}\label{lem:FiniteIndexGeneral}
Let $V$ be a group and let $B$ be a subgroup of finite index in $V$. Then $\rank V/V' \leqslant \rank B/B'$. 
\end{lemma}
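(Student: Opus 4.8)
The plan is to use the \emph{transfer} (Verlagerung) homomorphism associated to the finite-index subgroup $B \leqslant V$. Write $n = [V:B]$, and recall that the torsion-free rank of an abelian group $A$ is $\dim_\QQ(A \otimes_\ZZ \QQ)$, so that the assertion $\rank V/V' \leqslant \rank B/B'$ is equivalent to the dimension inequality
\[ \dim_\QQ\!\big((V/V') \otimes_\ZZ \QQ\big) \;\leqslant\; \dim_\QQ\!\big((B/B') \otimes_\ZZ \QQ\big). \]
Phrasing the goal this way has the advantage that it makes sense and requires no finiteness hypothesis on the abelianizations, which matters since the lemma is stated for a completely general group $V$. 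Since the target group $B/B'$ is abelian, the transfer map $V \to B/B'$ kills commutators and hence factors through $V/V'$, yielding a homomorphism $\tau \colon V/V' \to B/B'$.

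The crucial input is the standard transfer identity: if $i_* \colon B/B' \to V/V'$ denotes the map induced by the inclusion $B \hookrightarrow V$, then the composite $i_* \circ \tau$ equals multiplication by $n = [V:B]$ on $V/V'$. I would either cite this from a standard reference on the transfer or recall the short proof, which only uses a fixed set of coset representatives for $B$ in $V$ together with the fact that the defining product becomes order-independent once one passes to the abelian quotient $B/B'$. This is the one place where finiteness of the index is genuinely used, so it is the heart of the argument; the rest is formal.

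With this identity in hand, I would tensor the whole picture with $\QQ$. The endomorphism $i_* \circ \tau$ becomes multiplication by $n$ on the $\QQ$-vector space $(V/V')\otimes_\ZZ\QQ$, and since $n \neq 0$ this is an isomorphism. Consequently $\tau \otimes \mathrm{id}_\QQ$ must be injective, so $(V/V')\otimes_\ZZ\QQ$ embeds $\QQ$-linearly into $(B/B')\otimes_\ZZ\QQ$, giving the desired inequality of dimensions and hence of torsion-free ranks. I do not expect any real obstacle beyond correctly invoking the transfer identity; the only subtlety is to keep the argument valid when $V/V'$ is infinitely generated, which is precisely why I work with $\dim_\QQ$ of the rationalizations rather than with structure theorems for finitely generated abelian groups.
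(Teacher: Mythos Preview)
Your transfer argument is correct: the identity $i_* \circ \tau = n\cdot\mathrm{id}$ on $V/V'$ is exactly the standard Verlagerung computation, and after tensoring with $\QQ$ it forces $\tau\otimes\mathrm{id}_\QQ$ to be injective, giving the rank inequality without any finite-generation hypothesis.

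The paper, however, proceeds more elementarily and avoids the transfer altogether. It simply observes that the image $\varphi(B)$ of $B$ under the projection $\varphi\colon V\to V/V'$ has finite index in $V/V'$, hence the same rank; and since $B'\leqslant V'$, there is a surjection $B/B'\twoheadrightarrow \varphi(B)$, so $\rank B/B'\geqslant \rank\varphi(B)=\rank V/V'$. Thus the paper's argument only uses two trivial facts about ranks of abelian groups (finite-index subgroups preserve rank; surjections do not increase rank), whereas your approach invokes a nontrivial construction. What your route buys is an explicit $\QQ$-linear embedding $(V/V')\otimes\QQ\hookrightarrow(B/B')\otimes\QQ$ going in the ``hard'' direction, which is conceptually pleasant; what the paper's route buys is brevity and the absence of any external input beyond the definitions.
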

\begin{proof}
Let $\varphi: V \rightarrow V/V'$ be the natural projection. Then $\varphi(B)$ has finite index in $\varphi(V)$, as $v^n \in B$ for some $v \in V$ implies $\varphi(v)^n \in \varphi(B)$. So, $\rank \varphi(B) = \rank \varphi(V)$. Now consider $\tau: {B/B' \rightarrow \varphi(B),}$ ${bB' \mapsto bV'}$. As $B'$ is a subgroup of $V'$, we have that $\tau$ is surjective and thus $\rank B/B' \geqslant \rank \operatorname{im}(\tau) = \rank \varphi(B).$  Putting together, we have $\rank V/V' \leqslant \rank B/B'$. 
\end{proof}

We are now ready to prove that if the known generic constructions of units generate a large subgroup in $\U(\ZZ G)$, then the questions on ranks have a affirmative answers in its strong form.\\

\noindent\textbf{Theorem~\ref{theo:ranks}.}\ \emph{  Let $G$ be a finite group and let $\mathcal{B}$ be the subgroup of $V = \V(\ZZ G)$, generated by the elements of $G$, the bicyclic and the Bass units of $\ZZ G$. If $\mathcal{B}$ has finite index in $V$, then $\rank V/V' = \rank \Z(V)$, i.e., \ref{R1} has a positive answer.}

\begin{proof} Let $\mathcal{B}$ be the subgroup of $V = \mathrm{V}(\mathbb{Z}G)$ generated by the trivial, bicyclic and Bass units. Then, by Propositions~\ref{prop:bicyclic} and \ref{prop:Bass}, we know that $\rank \mathcal{B}/\mathcal{B}' \leqslant \rank \Z(V)$. As $\mathcal{B}$ has finite index in $V$ by assumption, from \cite[Proposition~6.1]{BJJKT1} and Lemma~\ref{lem:FiniteIndexGeneral}, we obtain
\[\rank\Z(V) \leqslant \rank V/V' \leqslant \rank \mathcal{B}/\mathcal{B}' \leqslant \rank \Z(V), \]
which implies the theorem. \end{proof}

\section{Dihedral groups}\label{sec:Dihedral}

In this section, we study the abelianization of $\V(\ZZ G)$ for a dihedral group $G$. We show that (R1) holds for all dihedral groups and that also (E1) holds for dihedral groups of order $2p$. 

\begin{corollary}\label{cor:dihedral}
Let $G$ be a finite dihedral group and let $V = \V(\ZZ G)$. Then $\rank \Z(V) = \rank V/V'$, i.e.,\ \ref{R1} has a positive answer.
\end{corollary}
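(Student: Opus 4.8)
The plan is to derive Corollary~\ref{cor:dihedral} as a direct application of Theorem~\ref{theo:ranks}. Since Theorem~\ref{theo:ranks} asserts that \ref{R1} holds whenever the group $\mathcal{B}$ generated by $G$, the bicyclic units, and the Bass units has finite index in $V$, it suffices to verify that this finite-index condition is satisfied for every finite dihedral group $G$. This reduces the entire statement to a single known structural fact about unit groups of integral group rings of dihedral groups.

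First I would recall that the finite-index condition for $\mathcal{B}$ is precisely the kind of statement documented extensively in the literature on generators of unit groups of integral group rings. As the excerpt itself notes (just after the statement of Theorem~\ref{theo:ranks}), this condition has been verified for many classes of groups, and dihedral groups are explicitly mentioned as one of the classes for which it holds. The relevant references are \cite[Chapters~11 and 12]{JdR1} and \cite[Chapter~3]{Sehgal1993}. Concretely, for a dihedral group $G$, the rational group algebra $\QQ G$ decomposes into Wedderburn components that are either commutative (copies of $\QQ$ and real cyclotomic-type fields) or $2\times 2$ matrix algebras over such fields, none of which are exceptional of type (I), and the Bass and bicyclic units are known to generate a subgroup of finite index in $\U(\ZZ G)$ in this situation.

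The proof would therefore run as follows. I would begin by citing the finite-index result for dihedral groups, stating that $\mathcal{B}$ has finite index in $V = \V(\ZZ G)$. Having established this hypothesis, I would then simply invoke Theorem~\ref{theo:ranks} to conclude that $\rank V/V' = \rank \Z(V)$, which is exactly the assertion that \ref{R1} has a positive answer for $G$. No further computation with the abelianization or the center is needed, since all the analytic work has been absorbed into Propositions~\ref{prop:bicyclic} and \ref{prop:Bass} and packaged in Theorem~\ref{theo:ranks}.

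The main obstacle, and really the only substantive point, is locating and correctly invoking the appropriate finite-index theorem for dihedral groups from the cited sources, together with confirming that the generators used there (Bass and bicyclic units, plus the group elements) match the definition of $\mathcal{B}$ in Theorem~\ref{theo:ranks}. One subtlety worth checking is that the standard generator theorems are sometimes stated for $\V(\ZZ G)$ up to finite index using specific bicyclic and Bass unit families; I would confirm that the subgroup generated by \emph{all} such units (as in the definition of $\mathcal{B}$) contains these and hence also has finite index. Once this bookkeeping is settled, the corollary follows immediately, so the argument is genuinely short.
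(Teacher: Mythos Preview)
Your proposal is correct and matches the paper's own proof essentially line for line: the paper cites \cite[Theorem~23.1]{Sehgal1993} for the fact that the bicyclic and Bass units generate a subgroup of finite index in $V$ for dihedral groups, and then applies Theorem~\ref{theo:ranks} to conclude. The only refinement you could make is to give the precise citation rather than the general chapter references.
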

\begin{proof}
By \cite[Theorem 23.1]{Sehgal1993}, the group generated by bicyclic and Bass units in $V$ has finite index in $V$. So, \ref{R1} has a positive answer for $G$ by Theorem~\ref{theo:ranks}. 
\end{proof}

For the proof of (E1) for dihedral groups of order $2p$, we use the description of the unit group given by Passman and Smith \cite{PS81} and a result on congruence subgroups.\\

\noindent\textbf{Theorem~\ref{prop:Dihedral}.}\ \emph{ Let $G$ be a dihedral group of order $2p$, where $p$ is an odd prime and let $V = \V(\ZZ G)$.  Then, $\exp G/G'= \exp V/V'$, i.e., (E1) holds for $G$.}

	{\begin{proof} Let $G$ and $V$ be as in the statement of 				the theorem. Since $\U(\ZZ G) \cong V \times C_2$ and $G/G'$ has order $2$, we show that the abelianization of $\U(\ZZ G)$ is an elementary abelian $2$-group, to obtain the desired result. For this, we use the description of $\U(\ZZ G)$ from \cite[Theorem 3.7]{PS81}. 
				
		Let $\zeta$ be a primitive complex $p$-th root of unity, $z = \zeta+\zeta^{-1}$ and $R = \ZZ[z]$. Let $Q$ be the maximal ideal of $R$ containing $p$ or equivalently, in the language of \cite{PS81}, the intersection of the ideal $(\zeta-\zeta^{-1})\ZZ[\zeta]$ of $\ZZ[\zeta]$ with $R$. Note that $R/Q \cong \mathbb{F}_p$ and  $z \equiv 2 \mod Q$.  By \cite[Theorem 3.7]{PS81}, the group $\U(\ZZ G)$ is isomorphic to the matrix group
		\[U := \left\{ A = \begin{pmatrix} a & b \\ c & d \end{pmatrix} \in \operatorname{GL}_2(R) \ \middle| \ \det(A) \equiv \pm 1 \bmod Q \ \ \text{and} \ \ a+c \equiv b + d \equiv \pm 1 \bmod Q \right\} \]
		and we identify $\U(\ZZ G)$ with this group from now on. 
		
		Let \[U_1 = U \cap \SL_2(R) \qquad \text{and}\qquad  D = \left\langle\ \begin{pmatrix} \lambda & 0 \\ 0 & 1 \end{pmatrix} \ \middle| \ \lambda \in \U(R), \ \lambda \equiv 1 \bmod Q \ \right\rangle \leqslant U.\]\\
		We first claim that $U$ is generated by $U_1$, $D$ and $\left(\begin{smallmatrix} 0 & 1 \\ 1 & 0 \end{smallmatrix}\right)$. 
		Note that the last matrix is in $U$. To prove the claim, let $A =\left(\begin{smallmatrix} a & b \\ c & d \end{smallmatrix}\right) \in U$ and $\det(A) = \delta \equiv \pm 1\bmod Q$. If $\delta\equiv 1\bmod Q$, then also $\delta^{-1} \equiv  1 \bmod Q$, so 
		 that 
		\[\begin{pmatrix}\delta^{-1} & 0 \\ 0 & 1 \end{pmatrix} A  \in U_1.\]
		Hence $A = \left(\begin{smallmatrix} \delta & 0 \\ 0 & 1 \end{smallmatrix}\right) A_1$, where $A_1 \in U_1$ and  $\left(\begin{smallmatrix} \delta & 0 \\ 0 & 1 \end{smallmatrix}\right) \in D$. Moreover, if $\delta \equiv -1 \bmod Q$, then $\left(\begin{smallmatrix} -\delta & 0 \\ 0 & 1 \end{smallmatrix}\right) \in D$ and in this case 
		\[\begin{pmatrix} 0 & 1 \\ 1 & 0 \end{pmatrix} \begin{pmatrix} -\delta^{-1} & 0 \\ 0 & 1 \end{pmatrix}A \in U_1, \]
		 so that  $A = \left(\begin{smallmatrix} -\delta & 0 \\ 0 & 1 \end{smallmatrix}\right) \left(\begin{smallmatrix} 0 & 1 \\ 1 & 0 \end{smallmatrix}\right)A_1$ for some $A_1\in U_1$ and $\left(\begin{smallmatrix} -\delta & 0 \\ 0 & 1 \end{smallmatrix}\right) \in D$.
		
		As $Q$ is of finite index in $R$, for any unit $r \in \U(R)$ there is a number $k$ such that $r^k \equiv 1 \mod Q$. This means that for the abelian group $D$ we have $\rank D = \rank \U(R)$. Moreover, as $R$ is a totally real ring, in view of Dirichlet's Unit Theorem (see e.g.\ \cite[Theorem 5.2.4]{JdR1}), $\rank \U(R)$ equals $m_\mathbb{R} - 1$, where $m_\mathbb{R}$ is the number of real embeddings of $R$. Now, the real embeddings of $R$ are given by $z \mapsto \zeta^k + \zeta^{-k}$, for some $k \in \ZZ$ such that $k \not\equiv 0 \bmod p$. Hence, there are $\frac{p-1}{2}$ distinct real embeddings of $R$ and therefore $ \rank D = \rank \U(R) = \frac{p-3}{2}$. Moreover, $\rank \Z(U) = \rank\Z(\U(\ZZ G)) = \frac{p-3}{2}$, as it equals $n_\mathbb{R}-n_\mathbb{Q}$, the difference in number of real classes and the number of rational classes in $G$. Also,  by Corollary~\ref{cor:dihedral}, $\rank \Z(U) = \rank \varphi(U)$, where $\varphi: U \rightarrow U/U'$ denotes the natural projection. Hence, $\rank D = \rank\varphi(U)= \frac{p-3}{2}$. 	
		
		We first observe that it suffices to show  that $\varphi(U_1)$ is an elementary abelian $2$-group.  For this, note that because $D$, $U_1$ and $\left(\begin{smallmatrix} 0 & 1 \\ 1 & 0 \end{smallmatrix}\right)$ generate $U$, we have that $\frac{p-3}{2} = \rank \varphi(U) \leqslant \rank \varphi(U_1) + \rank\varphi(D)$, as of course the order of $\varphi\left(\left(\begin{smallmatrix} 0 & 1 \\ 1 & 0 \end{smallmatrix}\right)\right)$ divides $2$. Since $\rank \varphi(D)\leqslant \rank D = \frac{p-3}{2}$, this implies $\rank \varphi(D) = \frac{p-3}{2}$, which in other words means that no element of infinite order in $D$ is mapped to an element of finite order by $\varphi$ and so $\exp \varphi(D) \mid \exp D$. Furthermore, the torsion subgroup of $D$ is isomorphic to a torsion subgroup of $\U(R)$ which, again by Dirichlet's Theorem, is isomorphic to $C_2$, as $-1$ is the only non-trivial root of unity in $R$. Hence, if 	$\varphi(U_1)$ is an elementary abelian $2$-group, then so is $\varphi(U)$.
		
		Let  $\Gamma_Q$ be the congruence subgroup of $\SL_2(R)$ with respect to the ideal $Q$, i.e., the kernel of the natural map $\SL_2(R) \rightarrow \SL_2(R/Q) \cong \SL_2(\mathbb{F}_p)$. Clearly $\Gamma_Q \leqslant U$. We first show that $\varphi(\Gamma_Q)$ is an elementary abelian $2$-group. By \cite[Section 2.6, Corollary 3]{Ser70}, the group $\Gamma_Q$ is generated by the $\SL_2(R)$-conjugates of the matrices $E(q):=  \left(\begin{smallmatrix} 1 & q \\ 0 & 1 \end{smallmatrix}\right)$, for $q \in Q$. We show that for each $\SL_2(R)$-conjugate $X$ of a matrix $E(q)$, for $q \in Q$, there is an element $Y \in U$ such that
		$X^{Y}=X^{-1}$ and hence $\varphi(\Gamma_Q)$ is an elementary abelian $2$-group. 
		
		For this, first set $Y = \left(\begin{smallmatrix} -1 & -2 \\ 0 & 1 \end{smallmatrix}\right) \in U$. Then $E(q)^{Y } = E(-q) = E(q)^{-1}$. Now let $S = \left(\begin{smallmatrix} a & b \\ c & d \end{smallmatrix}\right) \in \SL_2(R)$ and $E = E(q)^{S}$. Set $C(t):= \left(\begin{smallmatrix} 1 & t \\ 0 & 1 \end{smallmatrix}\right)$ for $t \in R$ and note that $C(t)\in \textup{C}_{\SL_2(R)}(E(q))$, the centralizer of $E(q)$ in $\SL_2(R)$, so that $E = E(q)^{C(t) S}$. It will hence suffice to show that there exists a $t \in R$ such that ${Y }^{C(t)S}$ is an element in $U$, as the property $E(q)^{Y } = E(q)^{-1}$ is of course invariant under conjugation.
		
		Using $ad -bc = 1$, i.e., the fact that $S$ lies in $\SL_2(R)$, we obtain 
		\[{Y }^{S} = \begin{pmatrix} -1-2bc -2cd & -2bd-2d^2 \\ 2ac+2c^2 & -1+2ad+2cd \end{pmatrix}. \]
		Moreover, $C(t)S= \begin{pmatrix} a+tc & b+td \\ c & d \end{pmatrix}$, so
		\[{Y }^{C(t)S} = \begin{pmatrix} -1-2(b+td)c -2cd & -2(b+td)d-2d^2 \\ 2(a+tc)c+2c^2 & -1+2(a+tc)d+2cd \end{pmatrix}. \]
		We want to find a $t \in R$ such that this matrix lies in $U$. As the determinant of this matrix is just the determinant of $Y $, i.e.\,$-1$, this means that the sums of the columns should be congruent to each other and congruent to $\pm 1$ modulo $Q$. Now the sum of the first column is $-1+2c(-b-td-d+a+tc+c)$ and the sum of the second column is $-1+2d(-b-td-d+a+tc+c)$. Hence we have to find $t$ such that

\vspace{-.2cm}		
		\begin{align}\label{eq:colsum}
		c(-b-td-d+a+tc+c) \equiv d(-b-td-d+a+tc+c) \equiv e \bmod Q 
		\end{align}
		
		where $e \in \{0,1\}$. If $c\equiv d \bmod Q $, then using the fact $ad-bc=1,$ we observe that (\ref{eq:colsum}) is satisfied.
	
		In case, $c \not\equiv d \bmod Q$, then 
		\[-b-td-d+a+tc+c \equiv 0 \bmod Q \quad \Leftrightarrow \quad t \equiv \frac{b+d-a-c}{c-d} \bmod Q \]
		and we can pick any $t \in R$ satisfying this condition so that we have $e = 0$ in \eqref{eq:colsum}. 
		
	Therefore,  in order to complete the proof, that also $\varphi(U_1)$ is an elementary abelian $2$-group, it only remains to show that if $A_1\in U_1 \setminus \Gamma_Q$, then the order of $\varphi(A_1)$ is a divisor of $2$. Note that $\Gamma_Q$ is actually a normal subgroup of $U$ as it is a normal subgroup of $\GL_2(R)$. Let $\pi: U \rightarrow U/\Gamma_Q \leqslant \GL_2(\mathbb{F}_p)$ be the natural projection. 
		From the conditions	on the elements of $U_1$, we get that $\pi(U_1)$ consists of matrices of the form $\left(\begin{smallmatrix} 1+a & a \\ -a & 1-a \end{smallmatrix}\right)$ and $\left(\begin{smallmatrix} -1+a & a \\ -a & -1-a \end{smallmatrix}\right)$ for some $a \in \mathbb{F}_p$. These  matrices are $2p$ in number and form a cyclic group of order $2p$, the involution in the group being $\left(\begin{smallmatrix} -1 & 0 \\ 0 & -1 \end{smallmatrix}\right)$ and an element of order $p$ is given by any matrix  $T(a): = \left(\begin{smallmatrix} 1+a & a \\ -a & 1-a \end{smallmatrix}\right)$, for $a \neq 0$. Denote by $A$, a matrix of $U$ such that $\pi(A) = T(a)$ for some $a \neq 0$. Thus any element in $U_1$ can be written as $\left(\begin{smallmatrix} -1 & 0 \\ 0 & -1 \end{smallmatrix}\right)^i A^j \gamma$ for some integers $i,j$ and some $\gamma \in \Gamma_Q$. It will hence be sufficient to show that the order of $\varphi(A)$ is divisible by $2$, as we have already shown this for $\varphi(\gamma)$ and it is obviously true for $\varphi(\left(\begin{smallmatrix} -1 & 0 \\ 0 & -1 \end{smallmatrix}\right))$. 
		
		For this, let $\varphi': U/\Gamma_Q \rightarrow (U/\Gamma_Q)/(U/\Gamma_Q)'$ be the natural projection. Note that $(U/\Gamma_Q)/(U/\Gamma_Q)' = U/(U'\Gamma_Q)$, as it is just the biggest abelian quotient of $U$ which contains $\Gamma_Q$ in its kernel. Let $\pi': U/U' \rightarrow U/(U'\Gamma_Q)$ be the natural projection. Hence, we have a commutative diagram \begin{center}
			\begin{tikzcd}
			U \arrow[r, "\varphi"]\arrow[d, "\pi"] & U/U' \arrow[d, "\pi'"] \\ 
			U/\Gamma_Q \arrow[r, "\varphi'"] & U/(U'\Gamma_Q)
			\end{tikzcd}
		\end{center}
		Now $T(a)^{\pi\left(\left(\begin{smallmatrix} 0 & 1 \\ 1 & 0 \end{smallmatrix}\right)\right)} = T(a)^{-1}$, so that
		 $\varphi'(T(a)^2) = 1$ and hence $\varphi'(T(a)) = 1$, as $T(a)$ has order $p$. 
		Consequently, $\varphi'(\pi(A)) = 1 = \pi'(\varphi(A))$. As $\ker \pi' = \varphi(\Gamma_Q)$ is an elementary abelian $2$-group by our previous calculations, 
		 we get that the order of $\varphi(A)$ 
		 is divisible by $2$. 
	 \end{proof}}
	In the next section, the abelianization of the dihedral groups of order $6$ and $8$ is calculated explicitly, see Examples~\ref{ex:S3} and \ref{ex:D8}.

\section{Explicit abelianization of some unit groups}\label{sec:ExplicitAb}

In this section, we analyze the abelianization of the (normalized) unit group $V:=\V(\mathbb{Z}G)$ of an integral group ring, for a group $G$, of order at most $16$. In the process, we also compute the explicit structure of the full unit group $V =\V(\mathbb{Z}G)$ for certain of these groups, which is otherwise known for very few cases. This can be seen as a contribution to \cite[Problem 17]{Sehgal1993}. We use \textsf{GAP} \cite{GAP4} in some of our proofs and the code to check our claims can be downloaded from the GitHub repository \cite{BMMGit}. 
	Eventually, we prove the following:\\

\noindent\textbf{Theorem~\ref{theo:SmallOrders}.}\ \emph{Let $G$ be a group and let $V = \V(\ZZ G)$.
\begin{enumerate}
\item If $G$ is of order at most $15$, then \ref{R1} and \ref{E1} have positive answers for $G$.
\item There are non-abelian groups of order $16$ for which \ref{R1} has a positive answer. There is a group of order $16$ for which \ref{R2}, and hence also \ref{R1}, has a negative answer. \\
\end{enumerate} } 

If $G$ is an abelian group, then so is $\V(\mathbb{Z}G)$ and hence $V/V'$ is finite, if and only if $G$ is an abelian \cut group, i.e., an abelian group of exponent $1,\ 2,\  3,\ 4$ or $6$. Moreover, if $G=E\times Q_8$, where $E$ denotes elementary abelian 2-group and $Q_8$ denotes the quaternion group of order 8, then $V=G$ and $V/V'=G/G'=E\times C_2\times C_2.$ In all other cases, we have $V\supsetneq G$ \cite{Hig40}. Furthermore, if $G$ is any abelian group (not necessarily of exponent $1,\ 2,\ 3,\ 4$ or $6$), then $V/V'=V=G\times F$, $F$ being a finitely generated group of rank $\frac{1}{2}(|G|+n_2-2c+1)$, where $|G|$ denotes the order of the group $G$, $n_2$ is the number of elements of order $2$ in $G$ and $c$ is the number of cyclic subgroups of $G$ \cite[Theorem 4]{AA69}. Clearly, for all these groups, \ref{R1} and \ref{E1} hold. Henceforth, we only consider non-abelian groups, which have non-trivial units in the integral group ring.

Throughout this section, we denote by $o(v)$, the order of element $v$ and by $\overline{v}$, we denote the image of $v\in V$ under the natural projection $\varphi: V \rightarrow V/V'$. \\

\vspace{-.2cm}
\begin{example}\label{ex:S3}[\textbf{Groups of order 6}] 
	$S_3:=\langle a,b\mid  a^3=1,b^2=1,a^b=a^{-1}\rangle$. By \cite{JP92} (see also \cite{DJ03}, Theorem 2.1), we have $\V(\mathbb{Z}S_3) = \langle u_{0},u_{1},u_{2}\rangle  \rtimes S_{3}$, 
	where $u_{i}:=b(a,a^ib),~ 0\leqslant i \leqslant 2$ (notation as in Section~3). Observe that $u_{0}^a=u_1,~u_{1}^a=u_{2},~ u_2^a=u_0,~u_{0}^b=u_{0}^{-1},~u_1^b=u_2^{-1},~u_2^b=u_1^{-1}$ implying that 
	\begin{small}	\[\V(\mathbb{Z}S_{3})=\langle a,b, u_{0},u_{1},u_{2}\mid a^3=b^2=1, a^b=a^{-1}, u_{0}^a=u_1,~u_{1}^a=u_{2}, u_2^a=u_0,~u_{0}^b=u_{0}^{-1},~u_1^b=u_2^{-1},~u_2^b=u_1^{-1}  \rangle\]	\end{small}
	and hence direct calculations yield,
	\[\V(\mathbb{Z}S_{3})/\V(\mathbb{Z}S_{3})'= \langle  \overline{b },\overline{u_0} \mid \overline{b}^2=\overline{u_0}^2=\overline{1}, [ \overline{b },\overline{u_0}]=\overline{1}\rangle\cong C_2\times C_{2}.\]
	We conclude that \ref{R1} and \ref{E1} hold for $S_3$. Alternatively, one could conclude that \ref{R1} and \ref{E1} hold also using Proposition~\ref{prop:bicyclic} and the description of the generators as bicyclic and trivial units.
\end{example}

\vspace{-.2cm}
\begin{example}\label{ex:D8}[\textbf{Groups of order 8}] The quaternion group of order $8$, as discussed above trivially satisfies \ref{E1} and \ref{R1}. Now, for $D_8:=\langle a,b\mid a^4=1,b^2=1,a^b=a^{-1}\rangle$, using \cite[Theorem 3.1]{DJ03}, we have that

\vspace{-.2cm}
 \begin{footnotesize} \begin{align*} \V(\mathbb{Z}D_{8})=\langle & a,b, u_{0},u_{1},u_{2}\mid  a^4=b^2=1, a^b=a^{-1}, u_{0}^a=u_2,~u_{1}^a=(u_0u_1u_2)^{-1},u_{2}^a=u_0,~u_{0}^b=u_{0}^{-1},~u_1^b=u_0u_1u_2,~u_2^b=u_2^{-1}  \rangle,\end{align*} 	\end{footnotesize}
where $u_{i}:=b(a,a^ib),~ 0\leqslant i \leqslant 2 $ and therefore,
\[\V(\mathbb{Z}D_{8})/\V(\mathbb{Z}D_{8})'= \langle \overline{a}, \overline{b },\overline{u_0},\overline{u_1}\mid \overline{a}^2=\overline{b}^2=\overline{u_0}^2=\overline{u_1}^2=\overline{1}\rangle\cong C_{2}^4.\]
Again, \ref{R1} and \ref{E1} hold for $D_8$, and this last conclusion also follows from Proposition~\ref{prop:bicyclic}.
\end{example}

It may be noted that Examples \ref{ex:S3} and \ref{ex:D8} have also been studied in \cite{SGV97} and \cite{SG01} respectively. We recalculated for convenience and completeness.

\begin{example}\label{ex:Order12}[\textbf{Groups of order 12}] 
We next consider non-abelian groups of order 12 and show that \ref{R1} and \ref{E1} hold for all of these groups. The three non-abelian groups of order 12 are:
\begin{enumerate}
		\item[(i)] $A_4$, the alternating group on 4 elements;	
		\item[(ii)]\label{ex:D12} $D_{12}:=\langle a,b \mid a^6=1,b^2=1,a^b=a^{-1}\rangle$, the dihedral group of order 12; and	
		\item[(iii)] $T:=\langle a,b\mid  a^6=1,b^2=a^3,a^b=a^{-1}\rangle$, the dicyclic group of order $12$ (\texttt{IdSmallGroup(T)\,=\,[12,1]}).\\
		
	\end{enumerate}

	Observe that all these groups are \cut  groups. It turns out that for all these groups $G$, the abelianization of the unit group $\V(\mathbb{Z}G)$ is finite.\\
	
\begin{enumerate}

	\item[(i)] It has been shown in \cite{SG00} that for $V= \V(\ZZ A_4)$ we have $V/V' \cong C_3$, so \ref{R1} and \ref{E1} hold.
	\ref{R1} could also be concluded without explicit calculations, as $\mathbb{Q}A_4$ does not have any exceptional component.\\

\item[(ii)] Next, let $G = D_{12}$. 
	By Corollary~\ref{cor:dihedral} we know that \ref{R1} holds for $G$.
	Moreover, by \cite[Theorem 2]{Jes95}, the group $V$ is generated by bicyclic and trivial units. 

	Recall that a bicyclic unit $b(g,h)$ is non-trivial if and only if $g$ does not normalize $\langle h \rangle$. As elements of order $3$ or $6$ generate normal subgroups in $G$, whenever $b(g,h)$ is a non-trivial bicyclic unit, the element $h$ must be of order $2$. Hence, we conclude by Proposition~\ref{prop:bicyclic} and the fact that $G/G' \cong C_2 \times C_2$ that $V/V'$ is an elementary abelian $2$-group. Therefore, \ref{E1} also holds. \\
	
\item[(iii)] We calculate the abelianization of $\V(\ZZ T)$ explicitly.

	 Let  $G=T:=\langle a,b\mid  a^6=1,b^2=a^3,ba=a^5b\rangle$.  The unit group $V = \mathrm{V}(\mathbb{Z}G)$ was studied in \cite[Theorem 4]{Par93} using the same notation for the generators of the group. We have $\mathrm{V}(\mathbb{Z}G)= C \rtimes G$ where $C$ is a free group of rank 5. In \cite[Theorem 4]{Par93} explicit generators of $C$ are given.
	
	To obtain the relations in the group $V/V'$, we write down the conjugates $x_i^a$ and $x_i^b$ for each $1 \leqslant i \leqslant 5$, obtained using GAP: 

\vspace{10pt}
\begin{minipage}{.33\textwidth}
\begin{itemize}
			\item[(1a)] $x_1^a= x_2$
			\item[(1b)] $x_1^b = x_1^{-1}$
			\item[(2a)] $x_2^a = x_3$
			\item[(2b)] $x_2^b = x_3^{-1}$
\end{itemize}
\end{minipage}
\begin{minipage}{.33\textwidth}
\begin{itemize}
			\item[(3a)] $x_3^a = x_1$
			\item[(3b)] $x_3^b = x_2^{-1}$
			\item[(4a)] $x_4^a = x_2x_4^{-1}x_5$ 
			\item[(4b)] $x_4^b =  x_1^{-1}x_5x_3^{-1}$
\end{itemize}
\end{minipage}
\begin{minipage}{.33\textwidth}
\begin{itemize}
			\item[(5a)] $x_5^a = x_2x_4^{-1}x_1$
			\item[(5b)] $x_5^b = x_1^{-1}x_4x_2^{-1}$
\end{itemize}
\end{minipage}
\vspace{0.3cm}

Clearly, (1a),(2a) and (3a) imply that $\overline{x_{1}}=\overline{x_{2}}=\overline{x_{3}}$ and it follows from (1b) that $o(\overline{x_{1}})$ divides~$2$. 
In view of these conclusions and the fact that $V/V'$ is abelian, (4b) yields that $\overline{x_{4}}=\overline{x_{5}}$. Consequently, (4a) yields that $\overline{x_{4}}=\overline{x_{1}}$. Overall, $\overline{x_i} = \overline{x_j}$ for all $1 \leqslant i,j \leqslant 5$ and we conclude from all relations that $\overline{x_1}$ has order $2$.

 Furthermore, $\overline{a}=\overline{b}^2$ and $\overline{a}=\overline{a}^{-1}$. Putting together, we obtain $V/V'\cong \langle \overline{x_{1}},\overline{b}\rangle \cong C_{2}\times C_{4}$.
		We conclude that \ref{R1} and \ref{E1} hold for $T$. Note that the  explicit presentation of $V$ has not been given before in the literature.
		
			\end{enumerate}
\end{example}
\begin{example}\label{ex:10and14} [\textbf{Groups of orders 10 and 14}] 
The only non-abelian groups of order $10$ and $14$ are dihedral. 
None of these groups is a \cut group, and hence the 
	abelianization of the unit group is not finite. It follows from Corollary~\ref{cor:dihedral} and Theorem~\ref{prop:Dihedral} 
	 that  both \ref{R1} and \ref{E1} have affirmative answers for these groups.
\end{example}

\begin{example}\label{ex:Ord16}
[\textbf{Groups of order 16}] There are nine non-abelian groups of order 16. We are able to  answer \ref{R1} for five of those. For four groups we prove \ref{R1}, while for one group we show that \ref{R2}, and hence also \ref{R1}, does not hold. We prove \ref{E1} for four of the groups, and for the others it remains open. \begin{itemize}
\item[(i)] Three groups for which \ref{R1} and \ref{E1} holds.\\
\begin{itemize}
\item If $G=Q_8\times C_2$ (\texttt{IdSmallGroup(G) = [16,12]}), then as mentioned in the beginning of the section, the unit group consists solely of trivial units and $V/V' \cong C_2\times C_2 \times C_2$. Clearly, (R1) and (E1) hold, in this case.
\item  Let $G=D_8\times C_2$ (\texttt{IdSmallGroup(G)=[16,11]}) and $V = \V(\mathbb{Z}G)$. By \cite[Theorem 3]{Jes95}, the group $V$ is generated by bicyclic units and trivial units. If $b(g,h)$ is a non-trivial bicyclic unit, then $h$ must be of order $2$, as elements of order different from $2$ generate a normal subgroup in $G$. Hence, by Proposition~\ref{prop:bicyclic}, we know that the order of $\varphi(b(g,h))$ divides $2$. We conclude that \ref{R1} and \ref{E1} hold for $G$.
\item If $G=P:=\langle a,b\mid a^4=1,b^4=1,a^b=a^{-1}\rangle$ (\texttt{IdSmallGroup(G) = [16,4]}), the unit group 
		 $\mathrm{V}(\mathbb{Z}G)$ was studied in \cite[Theorem 5.1]{JL91}. It is proved that $\mathrm{V}(\mathbb{Z}G) = C \rtimes G$ where $C$ is a free group of rank 9 and an explicit set of generators of $C$ is given.

			Proceeding as for the group $T$, in Example \ref{ex:Order12}, we write down the relations in the group, by computing the conjugates $v_i^a$ and $v_i^b$ for each $1 \leqslant i \leqslant 9$, using \textsf{GAP} as listed below:

\vspace{10pt}
\begin{minipage}{.25\textwidth}
\begin{itemize}
			\item[(1a)] $v_1^a = v_3v_2$
			\item[(1b)] $v_1^b = v_2^{-1}v_3^{-1}$
			\item[(2a)] $v_2^a = v_3v_1$
			\item[(2b)] $v_2^b = v_2^{-1}$
			\item[(3a)] $v_3^a = v_3^{-1}$ 
			\item[(3b)] $v_3^b = v_1^{-1}v_2$
\end{itemize}
\end{minipage}
\begin{minipage}{.33\textwidth}
\begin{itemize}
			\item[(4a)] $v_4^a = v_3v_5^{-1}v_7v_9v_6^{-1}v_1$ 
			\item[(4b)] $v_4^b = v_4^{-1}$
			\item[(5a)] $v_5^a = (v_5^{-1}v_7v_9v_8)^{v_3^{-1}}$
			\item[(5b)] $v_5^b = v_4^{-1}v_8^{-1}v_6^{-1}v_2$
			\item[(6a)] $v_6^a = (v_2v_4^{-1}v_9^{-1}v_7^{-1}v_5)^{v_3^{-1}}$
			\item[(6b)] $v_6^b = v_2^{-1}v_5^{-1}v_8v_4$
\end{itemize}
\end{minipage}
\begin{minipage}{.33\textwidth}
\begin{itemize}
			\item[(7a)] $v_7^a = (v_9v_8)^{v_7^{-1}v_5v_3^{-1}}$
			\item[(7b)] $v_7^b = (v_8^{-1}v_9^{-1})^{v_4}$
			\item[(8a)] $v_8^a = (v_7v_9)^{v_5v_3^{-1}}$
			\item[(8b)] $v_8^b = (v_8^{-1})^{v_4}$
			\item[(9a)] $v_9^a = (v_9^{-1})^{v_7^{-1}v_5v_3^{-1}}$
			\item[(9b)] $v_9^b = (v_7^{-1}v_8)^{v_4}$
\end{itemize}
\end{minipage}
\vspace{10pt}
	
	From (2b) and (3a) we have that $o(\overline{v_2})$ and $o(\overline{v_3})$ divide $2$ 
	and from (1a) we then also get that $o(\overline{v_1})$ divides $2$. Also from (1a) we have $v_1v_2v_3 \in \ker(\varphi)$, so  $\varphi(\langle v_1, v_2, v_3 \rangle)$ embeds in a $C_2 \times C_2$. Similarly, it follows from (7a)-(9b), that
	$\varphi(\langle v_7, v_8, v_9 \rangle)$ embeds in a $C_2 \times C_2$. 
	Furthermore, by (4b) we have that $o(\overline{v_4})$ divides $2$ and as $v_7v_8v_9 \in \ker(\varphi)$, therefore by (5a) we also have that $o(\overline{v_5})$ divides 2. Consequently, (5b) implies that $o(\overline{v_6})$ divides 2, so that $\varphi(\langle v_4, v_5, v_6 \rangle)$ embeds in a $C_2 \times C_2 \times C_2$. Finally, observing from (5b), $v_2v_4v_5v_6v_8 \in \ker(\varphi)$, equivalently, $\overline{v_4v_5v_6}=\overline{v_2v_8}$, and checking all relations, we obtain $\varphi(C) \cong C_2^6$. 
	Since, $G' = \langle a^2 \rangle$ and $G/G' \cong C_4 \times C_2$, it follows that $V/V'=\varphi(\mathrm{V}(\mathbb{Z}G)) \cong C_4 \times C_2^7$. In particular, \ref{R1} and \ref{E1} hold for $P$. \\
	
	Observe that the relations of the group $G$ and the relations listed in (1a)-(9b), yield a complete set of defining relations of the unit group $V$, which has also not been known before. Using this presentation of $V$, one can also use \textsf{GAP} to conclude $V/V' \cong C_4 \times C_2^7$. \\
	
    Note: There was a mistake in \cite[Theorem 5.1]{JL91}, as the $9^{th}$ generator given there is not a unit in $\mathbb{Z}G$, the origin of the mistake being probably a calculation mistake in the solution of certain linear equations. However, checking the calculations in the last paragraph of the proof, we could rectify this mistake and check that the other generators are given correctly. In fact,
		\[v_9 = 1 +  (2 -20a -9b + 18ab)(1-a^2)(1+b^2).\]
\end{itemize}

\item[(ii)]\label{ex:D16+} A group for which \ref{R2} does not hold, but \ref{E1} does.

If $G=D_{16}^+:=\langle a,b\mid  a^8=b^2=1,a^b=a^{5}\rangle$ (\texttt{IdSmallGroup(G) = [16,6]}), the unit group 
		$\mathrm{V}(\mathbb{Z}G)$ has been  studied in \cite{JL91}, \cite{JP93} 
		and \cite{PdRR05}.
It is proved that $D_{16}^+$ has a torsion-free normal complement $K$ in the unit group and a presentation of $K$ is given in \cite[Theorem 5.3]{PdRR05}. The explicit action of the group base on $K$ is not given there, but can be computed by \cite[Proposition 3.1]{JL91},  via $a =\left( \begin{smallmatrix} 0 & -i \\ -1 & 0 \end{smallmatrix}\right)$ and $b = \left( \begin{smallmatrix} 1 & 0 \\ 0 & -1 \end{smallmatrix}\right)$.

 Generators of the complement are called $A_0$, $A_1$, $P_j$, $Q_j$, $R_j$ and $S_j$ for $0 \leqslant j \leqslant 3$. We list the full action of the group base on $K$, obtained using \textsf{GAP}.\\
		
		 \vspace{10pt}
\begin{minipage}{.29\textwidth}
\begin{itemize}
		 	\item[($A_0$a)] $A_0^a = P_1^{-1}A_0P_3$  
		 	\item[($A_0$b)] $A_0^b = A_0^{-1}$
		 	\item[($A_1$a)] $A_1^a = P_0^{-1}A_1^{-1}P_2$
		 	\item[($A_1$b)] $A_1^b = A_1^{-1}$
		 	\item[($P_0$a)] $P_0^a = P_2^{-1}$  
		 	\item[($P_0$b)] $P_0^b = P_2$
		 	\item[($Q_0$a)] $Q_0^a = P_2^{-1}Q_3^{-1}P_3$
		 	\item[($Q_0$b)] $Q_0^b = Q_2$
		 	\item[($R_0$a)] $R_0^a = P_0^{-1}A_1^{-1}Q_3^{-1}S_3$
		 	\item[($R_0$b)] $R_0^b = R_2$
		 	\item[($S_0$a)] $S_0^a = P_2^{-1}S_3^{-1}P_3$
		 	\item[($S_0$b)] $S_0^b = S_2$

\end{itemize}
\end{minipage}
\begin{minipage}{.29\textwidth}
\begin{itemize}
		 	\item[($P_1$a)] $P_1^a = P_1^{-1}$
		 	\item[($P_1$b)] $P_1^b = P_3$
		 	\item[($Q_1$a)] $Q_1^a = P_1^{-1}Q_2^{-1}P_2$
		 	\item[($Q_1$b)] $Q_1^b = Q_3$
		 	\item[($R_1$a)] $R_1^a = P_0^{-1}Q_1^{-1}A_0S_2$
		 	\item[($R_1$b)] $R_1^b = R_3$
		 	\item[($S_1$a)] $S_1^a = P_1^{-1}S_2^{-1}P_2$
		 	\item[($S_1$b)] $S_1^b = S_3$
		 	\item[($P_2$a)] $P_2^a = P_0^{-1}$
		 	\item[($P_2$b)] $P_2^b = P_0$
		 	\item[($Q_2$a)] $Q_2^a = P_0^{-1}Q_1^{-1}P_1$
		 	\item[($Q_2$b)] $Q_2^b = Q_0$
\end{itemize}
\end{minipage}
\begin{minipage}{.33\textwidth}
\begin{itemize}
		 	\item[($R_2$a)] $R_2^a = P_2^{-1}A_1Q_1^{-1}S_1$
		 	\item[($R_2$b)] $R_2^b = R_0$
		 	\item[($S_2$a)] $S_2^a = P_0^{-1}S_1^{-1}P_1$
		 	\item[($S_2$b)] $S_2^b = S_0$
		 	\item[($P_3$a)] $P_3^a = P_3^{-1}$
		 	\item[($P_3$b)] $P_3^b = P_1$
		 	\item[($Q_3$a)] $Q_3^a = P_3^{-1}Q_0^{-1}P_0$
		 	\item[($Q_3$b)] $Q_3^b = Q_1$
		 	\item[($R_3$a)] $R_3^a = P_1^{-1}A_0Q_0^{-1}S_0$
		 	\item[($R_3$b)] $R_3^b = R_1$
		 	\item[($S_3$a)] $S_3^a = P_3^{-1}S_0^{-1}P_0$
		 	\item[($S_3$b)] $S_3^b = S_1$	
\end{itemize}
\end{minipage}
\vspace{10pt}
		 
Again, as in previous case, the relations of the group and the relations listed above, yield a complete set of defining relations of the unit group $V:=\V(\mathbb{Z}D_{16}^+)$. This presentation has also not been known before. 

A way to easily verify that $V/V'$ is infinite is the following: Define a map $\kappa\colon V = K \rtimes G \to W = \langle x \rangle \cong C_\infty$ by

\begin{align*}
\kappa(a) =  \kappa(b) =  \kappa(A_i) = \kappa(P_i) = \kappa(Q_i) = 1,  \\ 
\kappa(S_j) = \kappa(R_\ell) = x,  \quad \kappa(S_\ell) = \kappa(R_j) = x^{-1},
\end{align*}

where $i \in \{0,1,2,3 \}$, $j \in \{0,2 \}$ and $\ell \in \{1,3 \}$.
Apart from the action of $G$ on $K$, the relations inside of $K$ are the following by \cite[Theorem 5.3]{PdRR05}. Here we use the substitutions $A_2 = A_0^{-1}$ and $A_3 = A_1^{-1}$.

\begin{align*}
\begin{array}{cclclcl}
1 & = & [A_0,A_1] & = & R_1R_2R_3R_0 & = & S_3S_2S_1S_0 \\ 
& = & S_3^{-1}R_2S_1^{-1}R_0 & = & S_0^{-1}R_3S_2^{-1}R_1 &  &  \\ 
& = & P_3^{-1}A_0^{-1}Q_2^{-1}A_0^{-1}Q_1P_0 & = & R_0^{-1}A_1Q_0S_0^{-1}P_0 & = & S_0^{-1}Q_0R_0^{-1}A_1P_0 \\ 
& = & P_0^{-1}A_1^{-1}Q_3^{-1}A_1^{-1}Q_2P_1 & = & R_1^{-1}A_0^{-1}Q_1S_1^{-1}P_1 & = & S_1^{-1}Q_1R_1^{-1}A_0^{-1}P_1 \\ 
& = & P_1^{-1}A_0Q_0^{-1}A_0Q_3P_2 & = & R_2^{-1}A_1^{-1}Q_2S_2^{-1}P_2 & = & S_2^{-1}Q_2R_2^{-1}A_1^{-1}P_2 \\ 
& = & P_2^{-1}A_1Q_1^{-1}A_1Q_0P_3 & = & R_3^{-1}A_0Q_3S_3^{-1}P_3 & = & S_3^{-1}Q_3R_3^{-1}A_0P_3
\end{array} 
\end{align*}

One can now easily check that all these relations are satisfied by the images of $\kappa$ and  $\kappa$ is indeed a homomorphism. Hence $\kappa$ is a surjective group homomorphism from $V$ onto an infinite abelian group. As any abelian image of $V$ is an image of $V/V'$, we conclude that $V/V'$ is infinite. 
On the other hand, $G$ is a \cut group. Consequently, \ref{R2} 
 and hence \ref{R1} does not hold. \\
 
 Using \textsf{GAP}, we check that $V/V' \cong C_\infty \times C_4 \times C_2^5$. Hence \ref{E1} holds for $G$.\\
 
  \noindent Note: In \cite{PdRR05}, there is a typographical error in the definition of the generating elements. Namely, on page 3232 the elements $g_i$ should be defined as $\alpha^ig\alpha^{-i}$ instead of $\alpha^{-i}g\alpha^i$.\\

\item[(iii)] A group for which \ref{R1} holds, but \ref{E1} remains unknown.

If $G=D_{16}$ (\texttt{IdSmallGroup(G) = [16,7]}), the dihedral group of order 16, then \ref{R1} holds by Corollary~\ref{cor:dihedral}.\\

\item[(iv)] The remaining four groups. 
\begin{itemize}
\item If $G=D:=\langle a,b,c\mid  a^2=b^2=c^4=1,a^c=a,b^c=b,a^b=c^2ab\rangle$ (\texttt{IdSmallGroup(G) = [16,13]}), or $G=D_{16}^-:=\langle a,b\mid  a^8=b^2=1,a^b=a^{3}\rangle$ (\texttt{IdSmallGroup(G) = [16,8]}), the unit group $\mathrm{V}(\mathbb{Z}G)$ has also been  studied in \cite{JL91}, \cite{PdRR05} and \cite{PdR06}, and one could, in principle, compute the abelianization of their unit groups, analogous to the case of $D_{16}^+$. 
\item If $G=H:=\langle a,b\mid a^4=b^4=(ab)^2=1,(a^2)^b=a^2\rangle$ (\texttt{IdSmallGroup(G)=[16,3]}), then $G$ is a group of exponent 4 and hence there are no non-trivial Bass units in its integral group ring. But, in view of \cite[Corollary 7]{JP93}, the bicyclic and Bass units do not generate a subgroup of finite index in the full unit group. Though $H$ is a \cut group, but since $\mathbb{Q}H$ has an exceptional component,  we cannot conclude if the abelianization of the unit group for this group is finite or not.	
\item For $G = \langle a,b\mid a^8=1, b^2=a^4,a^b=a^{-1}\rangle$ (\texttt{IdSmallGroup(G)=[16,9]}), the quaternion group of order $16$, exceptional components of both types arise.
We can not decide on \ref{R1} or \ref{E1}. Clearly, the unit group has infinite abelianization, as $G$ it is not a \cut group. 
\end{itemize}

\end{itemize}
		
\end{example}

The following remark implies in particular a proof of Proposition~\ref{theo:FreeComplement}.

\begin{remark}\label{rem:freecomp}
	By \cite{Jes94}, a group $G$ has a normal free complement in the normalized unit group $V = \V(\ZZ G)$, if and only if $G$ is a symmetric group of degree $3$, a dihedral group of order $8$, the dicyclic group $T$ of order $12$ (\texttt{IdSmallGroup(T)=[12,1]}) or the semidirect product $P = C_4 \rtimes C_4$ (\texttt{IdSmallGroup(P)=[16,4]}). Using the computations done above, we observe that for these groups,
	\begin{itemize}
		\item $G = S_3$: $ G/G'\cong C_2$, $V/V' \cong C_2 \times C_2$.
		\item $G = D_8$: $G/G' \cong C_2\times C_2$, $V/V' \cong C_2^4$.
		\item $G = T$: $ G/G' = C_4$, $V/V' \cong C_4 \times C_2 $.
		\item $G = P$: $ G/G' = C_4\times C_2$, $V/V' \cong C_4 \times C_2^7$.
	\end{itemize}
Hence, \ref{E1} and \ref{R1} hold, for the groups $G$, which have a normal free complement in the normalized unit group $\V(\ZZ G)$. 
\end{remark}

We saw in Proposition~\ref{prop:WedderbrunComp} and the comment following it, that question \eqref{var2} has a positive answer for maximal orders in rational group algebras $\QQ G$. Now we will show that the situation for unit groups of the order $\ZZ G$ is more complicated. In doing this, we obtain answers to three questions from \cite{BJJKT1}. As pointed earlier, these type of questions were the original motivation for this article.

\begin{corollary}\label{cor:D16+}
Questions 6.4, 7.8 and 8.3(1) from \cite{BJJKT1} have negative answers, i.e.:
\begin{enumerate}
\item\label{answer6.4} If $\mathcal{O}_1$ and $\mathcal{O}_2$ are two orders in a finite dimensional simple $\mathbb{Q}$-algebra, then the abelianization of $\U(\mathcal{O}_1)$ can be finite though the abelianization of $\U(\mathcal{O}_2)$ is not.
\item\label{answer7.6} If $G$ is a \cut group, the trichotomy statement of fixed point properties of $\U(\mathbb{Z}G)$ described in \cite[Question~7.8]{BJJKT1} is not necessarily true. More precisely, there is a \cut group $G$ such that $\U(\ZZ G)$ neither has the property (HFA), nor the property (FA), nor is a non-trivial amalgamated product with finite abelianization, see \cite{BJJKT1} for the definitions. 
\item\label{answer8.3} There is a group $G$ such that $\U(\ZZ G)$ does not have property (FA), but $\U(\mathcal{O}_i)$ does for every $i$. Here $\QQ G \cong \prod \operatorname{M}_{n_i}(D_i)$ for certain division algebras $D_i$ and $\mathcal{O}_i$ is a maximal order in $\operatorname{M}_{n_i}(D_i)$ for each $i$.
\end{enumerate}
\end{corollary}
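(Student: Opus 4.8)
The plan is to show that Corollary~\ref{cor:D16+} follows almost entirely from the analysis of $D_{16}^+$ carried out in Example~\ref{ex:Ord16}(ii), together with the facts on maximal orders assembled in Proposition~\ref{prop:WedderbrunComp}. The single group $G = D_{16}^+$ is the natural candidate to settle all three parts at once, since it is a \cut group whose unit group nonetheless has infinite abelianization. First I would recall the two key properties established earlier: $G$ is a \cut group (so $\Z(V)$ is finite), and yet $\kappa$ exhibits a surjection $V \twoheadrightarrow C_\infty$, whence $V/V'$ is infinite. This is precisely a group $G$ with $\Z(V)$ finite but $V/V'$ infinite, giving the negative answer to \ref{R2} that drives all three items.

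For \eqref{answer6.4} I would exploit the Wedderburn decomposition $\QQ G \cong \prod_i \M_{n_i}(D_i)$. Since $G$ is a \cut group, no component of type~(I) occurs by \cite[Proposition~6.10]{BJJKT1}, so Proposition~\ref{prop:WedderbrunComp} applies to \emph{every} maximal order $\O_i$ in each component, yielding $\rank \U(\O_i)/\U(\O_i)' = \rank \Z(\U(\O_i))$; in fact each $\U(\O_i)$ has finite abelianization since $G$ is \cut (the center of each $\U(\O_i)$ is finite). Taking $\O_2 = \ZZ G$ (an order in the whole of $\QQ G$, but the statement is about a \emph{single} simple component, so I would instead pick the two orders inside one distinguished simple component $A$). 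The point is that $\ZZ G$ does not decompose as a product of maximal orders in the components: one can compare a maximal order $\O_1$ in the relevant component $A$ of $\QQ G$ (with finite $\U(\O_1)/\U(\O_1)'$) against the image $\O_2$ of $\ZZ G$ in $A$, whose unit group inherits the infinite abelianization detected by $\kappa$. I would verify that $\kappa$ is nontrivial on the projection to this component, so that $\O_2$ has infinite abelianization while $\O_1$ does not, both being orders in the same simple algebra $A$.

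Part \eqref{answer7.6} is then immediate from the trichotomy of \cite[Question~7.8, Proposition~7.9]{BJJKT1}: a positive answer to \ref{R2} was shown there to force one of the three fixed-point alternatives (HFA), (FA), or a nontrivial amalgam with finite abelianization. Since $D_{16}^+$ violates \ref{R2}, the trichotomy fails for $\U(\ZZ D_{16}^+)$, and I would simply cite that an infinite abelianization rules out (FA) and (HFA) directly, while being a \cut group with finite center prevents the amalgam-with-finite-abelianization case. For \eqref{answer8.3} the argument is parallel: $\U(\ZZ G)$ surjecting onto $C_\infty$ via $\kappa$ means it acts on a tree (the Bass--Serre tree of this $\ZZ$-quotient) without a global fixed point, so $\U(\ZZ G)$ lacks (FA); meanwhile each $\U(\O_i)$ has property (FA) by the finite-abelianization computations in Proposition~\ref{prop:WedderbrunComp} combined with the (FA)-criteria from \cite{BJJKT1} for maximal orders in non-exceptional and the admissible exceptional components.

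The main obstacle is the bookkeeping in \eqref{answer6.4} and \eqref{answer8.3}: one must confirm that the infinite abelianization of $V$ is genuinely concentrated in (or at least visible through) a single simple component $A$, so that the comparison of two orders within one algebra is legitimate. Concretely, I would track through which Wedderburn component of $\QQ D_{16}^+$ the homomorphism $\kappa$ factors, identify that component $A = \M_{n}(D)$ (necessarily exceptional of type~(II), as it supports units of infinite order contributing to abelianization even though $G$ is \cut), and then produce the two orders $\O_1, \O_2 \subseteq A$ with the desired contrasting behaviour. Establishing that $\kappa$ survives projection to this one block—rather than being an artifact spread across several—is the delicate point; everything else is a direct citation of Example~\ref{ex:Ord16}(ii), Proposition~\ref{prop:WedderbrunComp}, and the fixed-point dictionary of \cite{BJJKT1}.
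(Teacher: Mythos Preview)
Your overall strategy---taking $G = D_{16}^+$ from Example~\ref{ex:Ord16}(ii) for all three parts---is exactly what the paper does. Two points, however, need correction.

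For part~\eqref{answer6.4} you make the problem harder than it is. The paper simply writes down the Wedderburn decomposition explicitly, $\QQ G \cong 4\QQ \times 2\QQ(i) \times \operatorname{M}_2(\QQ(i))$, from which one sees at once that there are no components of type~(I) and that the only non-commutative block is $\operatorname{M}_2(\QQ(i))$. One then compares a maximal order (finite abelianization of its unit group, by Proposition~\ref{prop:WedderbrunComp}) with $\ZZ G$ (infinite abelianization, by Example~\ref{ex:Ord16}(ii)). There is no need to ``track $\kappa$ through a component'': the commutative blocks have finite unit groups ($\{\pm 1\}$ and the fourth roots of unity), so whatever is infinite is automatically concentrated in $\operatorname{M}_2(\QQ(i))$. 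The ``delicate point'' you isolate dissolves once the decomposition is written down.

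For part~\eqref{answer7.6} your conclusion is right but the reasoning for the third alternative is muddled: ``being a \cut group with finite center'' is not what excludes the amalgam-with-finite-abelianization case---it is simply that $\U(\ZZ G)$ has \emph{infinite} abelianization, so it cannot satisfy any of the three alternatives, all of which entail finite abelianization. The paper just invokes \cite[Proposition~7.9]{BJJKT1}, which states this equivalence directly.

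For part~\eqref{answer8.3} there is a genuine gap. You claim each $\U(\O_i)$ has property (FA) ``by the finite-abelianization computations in Proposition~\ref{prop:WedderbrunComp} combined with the (FA)-criteria from \cite{BJJKT1}''. But finite abelianization is only a \emph{necessary} condition for (FA), not a sufficient one, and nothing in \cite{BJJKT1} lets you upgrade it. The paper instead observes that for this particular $G$ each maximal order in a component of $\QQ G$ either has a finite unit group (hence trivially has (FA)) or is $\operatorname{M}_2(\ZZ[i])$, and then cites the specific fact that $\GL_2(\ZZ[i])$ has property (FA) from \cite[Exercise~5 of Section~I.6.5]{Ser80}. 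Without that input your argument for \eqref{answer8.3} is incomplete.
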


\begin{proof}
All proofs are based on Example~\ref{ex:Ord16}(ii),
 i.e., $G = D_{16}^+=\langle a,b\mid  a^8=b^2=1,a^b=a^{5}\rangle$.

 \begin{enumerate}
 	\item We have $\QQ G \cong 4\QQ \times 2\QQ(i) \times \operatorname{M}_2(\QQ(i))$: This follows from the facts that $G' = \langle a^4 \rangle$, $G/G' \cong C_4 \times C_2$ and that $a \mapsto \left( \begin{smallmatrix} 0 & -i \\ -1 & 0\end{smallmatrix} \right), ~b \mapsto \left( \begin{smallmatrix} 1 & 0 \\ 0 & -1 \end{smallmatrix} \right) $  is an irreducible representation of $G$. Clearly, $G$ is a \cut group. Since $\QQ G$ does not have a non-commutative division algebra as simple component and the center of the unit group of a maximal order is finite, the unit group of a maximal $\mathbb{Z}$-order in $\QQ G$ has finite abelianization by Proposition~\ref{prop:WedderbrunComp}. On the other hand, $\ZZ G$ is also an order in $\QQ G$, but $\U(\ZZ G)$ has infinite abelianization by Example~\ref{ex:Ord16}(ii), so we obtain \eqref{answer6.4}.

 	\item It is proved in \cite[Proposition~7.9]{BJJKT1} that a positive answer to \cite[Question~7.8]{BJJKT1} for a \cut group $H$ is equivalent to $\U(\ZZ H)$ having finite abelianization. As this does not hold true for $G$, we obtain a negative answer. Moreover, in all the three properties listed in \cite[Question~7.8]{BJJKT1}, the abelianization of the unit group is finite. Hence, none of those hold for $G$, proving \eqref{answer7.6}. 
 	\item Follows from the fact that, for $G$, in each Wedderburn component of $\QQ G$, a maximal order has either finite unit group or is isomorphic to $\operatorname{M}_2(\ZZ[i])$ and that $\operatorname{GL}_2(\mathbb{Z}[i])$ has property (FA), see \cite[Exercise 5 of Section I.6.5]{Ser80} .\qedhere
 \end{enumerate}
\end{proof}

\section{Remarks}\label{sec:rems}
We provide several relevant observations.

\begin{remark} As mentioned in the introduction, the question whether  Bass and bicyclic units generate a subgroup of finite index in $\V(\ZZ G)$, is of major importance in the study of units in group rings. Here we recover a result of \cite{JP93}, namley that this is not the case for $D_{16}^+$. By Example~\ref{ex:D16+}(ii), we know that \ref{R1} has a negative answer for $D_{16}^+$ and hence Theorem~\ref{theo:ranks} implies that the subgroup of $\V(\ZZ D_{16}^+)$ generated by the Bass and bicyclic units has to be of infinite index in $\V(\ZZ D_{16}^+)$. 
\end{remark}

\begin{remark}
	Let $G= D_{16}^+$ as in Example~\ref{ex:D16+}(ii). In \cite[Theorem 3.1]{DJR07} a (non-explicit) set of units generating $V = \V(\ZZ G)$ was described, given by Bass units and unitary units. Hence, by the result of Propositions~\ref{prop:Bass}, Lemma~\ref{lem:FiniteIndexGeneral} and Example~\ref{ex:D16+}, we conclude that there must be a unitary unit $u$ such that $\varphi(u)$ has infinite order, although $G$ is a \cut group. This is in contrast with the behavior of bicyclic and Bass units, as exhibited in Propositions~\ref{prop:bicyclic} and \ref{prop:Bass}. 
\end{remark}

\begin{remark}
While the question of generating subgroups of finite index in $V = \mathrm{V}(\mathbb{Z}G)$ has found a lot of attention, finite quotients of $V$ and their properties have not been explicitly investigated so far to our knowledge. Our results suggest that at least some classes of finite quotients of $V$ can have interesting properties which are determined by $G$. But it should be noted that a question of type \ref{P} has easy negative solutions for general finite quotients of $V$.

Indeed, let $G = S_3$ and let $a$ be an element of order $3$ in $G$ and $b$ be an involution. Then, as explained in Example~\ref{ex:S3}, $V \cong F \rtimes S_3$, where $F$ is a free group of rank $3$ generated, say by $x$, $y$ and $z$, such that $x^a = y$, $y^a = z$, $z^b = z^{-1}$ and $x^b = y^{-1}$. Let $n$ be any integer and let 
\[N = \langle \langle [x,y], [y,z], [z,x], x^n, y^n, z^n \rangle \rangle_V \]
where $\langle \langle S \rangle \rangle_V$ means the smallest normal subgroup of $V$ containing the elements of $S$. Then $V/N \cong (C_n \times C_n \times C_n) \rtimes S_3$. Hence the exponent of finite quotients of $V$ can be divisible by any integer.

As a more trivial example, one could consider the images of an abelian group, e.g., $\V(\ZZ C_5) \cong C_5 \times C_\infty$ has finite images of arbitrary order.
\end{remark}

\begin{remark} 
	Proposition~\ref{theo:FreeComplement} is not just a consequence of the fact that $V$ is the semidirect product of a free normal subgroup $F$ and a finite group $G$, even when we assume that $G$ acts faithfully on $F$. For instance for $V = F \rtimes C_2$, where $F$ is a free group generated by 2 elements and the $C_2$ acts by interchanging those generators, $\Z(V) =1$ is finite, but $V/V' \cong C_\infty \times C_2$ is not.
\end{remark}

\noindent\textbf{Acknowledgment:}\quad We are thankful to Geoffrey Janssens, Eric Jespers, Ann Kiefer and Doryan Temmerman  
for useful conversations on unit groups and to Martyn Dixon, Joachim Schwermer and Rachel Skipper for insight into infinite groups. We are also thankful to Inder Bir Singh Passi, for pointing out the need to understand the abelianization of the unit group as a first necessary step to understand the lower central series of the unit group of an integral group ring. The second author gratefully acknowledges the support provided by Eric Jespers for stay at Vrije Universiteit Brussel, Belgium, which played a vital role in the outcome of this article. This research was supported in part by the International Centre for Theoretical Sciences (ICTS) during a visit for participating in the program- Group Algebras, Representations and Computation (Code: ICTS/Prog-garc2019/10).

\vspace*{.5cm}

\newcommand{\etalchar}[1]{$^{#1}$}
\providecommand{\bysame}{\leavevmode\hbox to3em{\hrulefill}\thinspace}
\providecommand{\MR}{\relax\ifhmode\unskip\space\fi MR }
\providecommand{\MRhref}[2]{%
  \href{http://www.ams.org/mathscinet-getitem?mr=#1}{#2}
}
\providecommand{\href}[2]{#2}

\end{document}